\newtheorem{theorem}{Theorem}
\newtheorem{lemma}[theorem]{Lemma}
\newtheorem{definition}{Definition}
\def\cu#1{{\color{black}#1}}
\def\cu#1{{\color{black}#1}} 
\title{\textbf{Graph-Theoretic Analysis of Belief System Dynamics under Logic Constraints}}
\author[1,2]{Angelia Nedi\'{c}}
\author[3]{Alex Olshevsky}
\author[4]{C\'{e}sar A. Uribe\thanks{Corresponding Author.}}
\affil[1]{ECEE Department, Arizona State University, Tempe AZ.}
\affil[2]{Moscow Institute of Physics and Technology (MIPT), Dolgoprudny, Russia}
\affil[3]{ECE Department and Division of Systems Engineering, Boston University, Boston MA.}
\affil[4]{Laboratory for Information and Decision Systems (LIDS), and the Institute for Data, Systems, and Society (IDSS), Massachusetts Institute of Technology, Cambridge MA. }
\date{}
\begin{document}
	\maketitle
	\begin{abstract}
		Opinion formation cannot be modeled solely as an ideological deduction from a set of principles; rather, repeated social interactions and logic constraints among statements are consequential in the construct of belief systems. We address three basic questions in the analysis of social opinion dynamics: (i) Will a belief system converge? (ii) How long does it take to converge? (iii) Where does it converge? We provide graph-theoretic answers to these questions for a model of opinion dynamics of a belief system with logic constraints. Our results make plain the implicit dependence of the convergence properties of a belief system on the underlying social network and on the set of logic constraints that relate beliefs on different statements. Moreover, we provide an explicit analysis of a variety of commonly used large-scale network models.
	\end{abstract}
	
	
\section*{Introduction}

	The modeling of opinion dynamics spans several decades of interdisciplinary research\cite{con64,fel88,ace11,jac10,heg05,mir12,fri15,car59,fri11}. Belief systems are typically modeled as a process where agents continuously update their opinions on a set of truth statements via repeated interactions, and opinions are exchanged following some social structure\cite{deg74,abe64}. New opinions are formed by aggregating operations weighted by the relative importance assigned by an individual to others. This simple characterization has provided tools for analyzing the long-term behavior of belief systems using systems theory. Nevertheless, without significant modification, this framework has been shown insufficient to explain the existence of shared beliefs in a population\cite{fri16}. 
	
	Recently proposed generalizations of opinion dynamics models integrate functional interdependencies among issues that coherently bound ideas and attitudes\cite{par16}. The existence of \textit{logic constraints} in a belief system provides a successful model for the evolution of opinions in both large-scale populations and small groups\cite{fri16}. \textit{Logic constraints} build upon the natural idea that believing a specific statement is true may depend on the belief that some other related statements are true as well. 
	
	Understanding the role of the networks involved in the structural features of a belief system is of critical importance and can have direct implications for better decision-making and policy design\cite{but16,amb04,for05,van17,fri16}. Even though sophisticated algebraic tools\cite{par16,pros2017} exists for the analysis of opinion dynamics, they can be unpractical or intractable for large-scale complex networks.
	
	In this paper, we study how the structural properties of the social network of agents and the set of logic constraints influence the dynamics of a belief system from a \textit{graph-theoretic} point of view. We describe the combinatorial features which influence the convergence of beliefs, the expected convergence time and the stationary value of the belief system. Informally, we answer the following three questions with graph-theoretic conditions that are easily accessible for a number of commonly used topologies in large-scale complex networks:
	\begin{enumerate}\label{questions}
		\item When does a belief system converge? \label{q1}
		\item How long does it take for a belief system to converge? \label{q2}
		\item Where does a belief system converge? \label{q3}
	\end{enumerate}

\section*{Results}

\subsection*{Belief System with Logic Constraints}

Friedkin et al.\cite{fri16,par16} describe a belief system with logic constraints as a group of $n$ agents that periodically exchange and update their opinions about a set of $m$ different truth statements with logical dependencies among them. After each social interaction, the agents use shared opinions, as well as underlying logical dependencies among them, to update their beliefs. 
	
	The agents exchange their opinions by interacting over a social network captured by a graph $\mathcal{G}= (V,E)$, where $V$ is the set of agents, and $E$ is a set of edges. A directed edge towards an agent indicates that it receives the opinion of another agent, i.e., the directed flow of information. Analogously, the logical dependencies among the truth statements are modeled by a graph 
	$\mathcal{T} = (W,D)$, where an edge between two statements exists if the belief in one statement affects belief in the other.  
	
	The generalized dynamics of a belief system are defined as follows. First, every agent aggregates its opinions on every truth statement according to the imposed logic constraints (i.e., modifying the opinions to consider the dependencies on the other truth statements). 
	Second, the agents share their opinions over a social network,
	where the opinions are aggregated again to take into account those coming from the neighboring agents (i.e., social interactions). 
	Finally, a new opinion is formed as a combination of the most recent aggregation and the initial opinion, modeling the adversity to deviate from the initial beliefs or stubbornness. 
	
The aggregation steps consist of weighted (convex) combinations of the available values, where the weights represent the relative influence. This model is described in the following equations~\eqref{individual_model} for an arbitrary agent $i\in V$ and an arbitrary statement $u\in W$:
	\begin{subequations}\label{individual_model}
		\begin{align}
		\hat x^i_k(u) & = \sum_{v=1}^{m} C_{uv} x^i_k(v) &\text{(Aggregation by logic constraints)}\\
		\bar x^i_k(u) & = \sum_{j=1}^{n}A_{ij} \hat x^j_k(u)&\text{(Aggregation by social network)}\\
		x^i_{k+1}(u) & = \lambda^i \bar x^i_k(u) + (1-\lambda^i) x^i_0(u)&\text{(Influence of initial beliefs)} \label{individual_model3}
		\end{align}
	\end{subequations}
	where ${{0 \leq x^i_k(u) \leq 1 }}$ represents the opinion of an agent $i$ at time $k$ on a certain 
	statement $u$, while $\hat x^i_k(u)$ and $\bar x^i_k(u)$ are the intermediate aggregation steps. The opinion of an agent on a specific statement being true or false is modeled by a scalar value between zero and one. A value of zero indicates that the given agent strongly believes a specific statement is false, whereas a value of one indicates that the agent believes the statement is true. Similarly, a value of $0.5$ indicates the maximal uncertainty about a statement.  
	
	The intermediate aggregated opinion $\hat x^i_k(u)$ of agent $i$ on statement $u$ 
	is formed by using the opinions of the same agent about the other statements $v$.
	The parameters $0 \leq C_{uv}\leq 1$ are compliant with the graph $\mathcal{T}$ that models the logic constraints in the sense that $C_{uv}$ is nonzero if the statement $u$ depends on statement $v$, and 
	otherwise $C_{uv}=0$. These parameters represent the strength of the logic constraints, i.e., 
	the influence that an opinion on a statement has on the opinion on other statements.
	Subsequently, the intermediate aggregated opinion $\bar x^i_k(u)$ of agent $i$ on statement $u$ is
	formed by combining all the intermediate opinions $\bar x^i_k(u)$ of neighboring agents $j$. In this update,
	the parameters $0 \leq A_{ij} \leq 1$ represent the weights that an agent $i$ assigns to the information coming from its neighbor $j$, for example $A_{13}$ is how agent $1$ weights the opinions shared by agent $3$. These parameters are compliant with the network $\mathcal{G}$ in the sense that 
	if there is an incoming edge to agent $i$ from agent $j$ in the graph, 
	then the corresponding weight $A_{ij}$ is nonzero.
	
	Equation~\eqref{individual_model3} indicates that, at time $k+1$,
	the new opinion $x_{k+1}^i(u)$ of agent $i$ on statement $u$ is obtained as a weighted combination of its intermediate aggregated opinion $\bar x^i_k(u)$ at time $k$ and its initial opinion $x^i_0(u)$ on statement $u$.
	The parameter $0\leq \lambda^i \leq 1$ that agent $i$ uses models its stubbornness. 
	If $\lambda^i <1$ we say an agent is \textit{stubborn}, where $\lambda^i=0$ indicates that the agent $i$ is \textit{maximally closed} to the influence of others. If $\lambda^i=1$, agent $i$ is said to be \textit{maximally open} to the influence of others, and \textit{oblivious} if additionally, it is not influenced by stubborn agents.

	We can group the parameters $\{A_{ij}\}$ into 
	an $n$-by-$n$ matrix $A$, known as the \textit{social influence structure}, 
	and the parameters $\{C_{uv}\}$ into an 
	$m$-by-$m$ matrix $C$, known as the \textit{multi-issues dependent structure}\cite{par16}. We assume these matrices are nonnegative. 
	Furthermore, the weights $A_{ij}$ assigned by an agent $i$ to its neighbors $j$
	sum up to one, i.e., the sum of the entries in each row of the matrix $A$ is $1$; likewise, the sum of the entries in each row of the matrix $C$ is $1$. 
	Thus, the matrices $A$ and $C$ are row-stochastic.

	Figure~\ref{networks1} illustrates a belief system with $4$ agents and $3$ truth statements,
	moreover, it gives examples for the choice of the matrices $A$ and $C$. Figure~\ref{networks1}(c) shows the belief system generated by the network of agents in 
	Fig.~\ref{networks1}(a) and the set of logic constraints in Fig.~\ref{networks1}(b). 
	This new graph depicted in Fig.~\ref{networks1}(c) is much larger than the network of agents or the network of statements taken separately; effectively; it has $2nm$ nodes. 
	The belief of each agent on each truth statement is a separate node; also, 
	the initial beliefs are separate nodes. 
	
	The model of this larger graph of the belief system can be compactly restated as
	\begin{align}\label{consensus}
	x_{k+1} = P x_k,
	\end{align}
	where $x_k \in [0,1]^{2nm}$ is a state that stacks the current 
	beliefs of all agents on all topics alongside with the initial beliefs, i.e., 
	\begin{align*}
	x_k & = \bigg[ \underbrace{x^1_k(1),\hdots,x^1_k(m)}_{\text{Beliefs of Agent $1$}},\hdots,\underbrace{x^n_k(1),\hdots,x^n_k(m)}_{\text{Beliefs of Agent $n$}}, \underbrace{x^1_0(1),\hdots,x^1_0(m)}_{\text{Initial Beliefs of Agent $1$}},\hdots,\underbrace{x^n_0(1),\hdots,x^n_0(m)}_{\text{Initial Beliefs of Agent $n$}}\bigg] '
	\end{align*}
	and
	\begin{align*}
	P & = \left[ 
	\begin{array}{c | c}
	(\Lambda A)\otimes C & (\boldsymbol{I}_n-\Lambda)\otimes \boldsymbol{I}_{m}\\
	\hline
	\boldsymbol{0}_{nm}&\boldsymbol{I}_{nm}
	\end{array}
	\right] ,
	\end{align*}
	where $\boldsymbol{0}_{nm}$ is a zero matrix of size $n \times m$, $\boldsymbol{I}_{nm}$ is an identity matrix of size $n \times m$, $\otimes$~indicates the Kronecker product 
	(see Supplementary Definition~\ref{def:kronecker}), $\Lambda$ is a diagonal matrix with the $i$-th diagonal entry being $\lambda^i$, and $x'$ denotes the transpose of a vector or matrix $x$. This allows for the definition of the belief system graph $\mathcal{P}$, 
	which is compliant with the matrix $P$, where an edge from $\ell$ to $r$ exists if $P_{r\ell} > 0$. 

	
	Figure~\ref{fig:ex1} shows an example where a network of $5$ agents forms a cycle graph, 
	given in Fig.~\ref{fig:ex1}(a), a set of $4$ logic constraints forms a directed path, given in Fig.~\ref{fig:ex1}(b), and $\lambda^i = 1$ for all $i$.
	The belief system graph is shown in Fig.~\ref{fig:ex1}(c). 
	Figure~\ref{fig:ex1}(d) shows the dynamics of the belief vector as the number of social interactions increases. 
	The opinion on all $4$ topics converges to a single value for all agents. 
	Figure~\ref{fig:ex1}(e) shows the dynamics of the belief vector when no logic constraints are considered. In this case, the agents reach some agreement on the final value, but this consensual value is different for each of the statements. See Supplementary Fig.~\ref{learning_network} for an additional example of the influence of the logic constraints on the resulting belief system and Supplementary Fig.~\ref{fig:supp4} 
	for a variation of the example discussed in Fig.~\ref{fig:ex1} when the network of agents is a complete graph.

\subsection*{When does a Belief System Converge?}

	The convergence of the belief system can be stated as a question of the existence of a 
	limit of the beliefs, as the social interactions continue with time. 
	That is, whether there exists a vector of opinions $x_\infty$ such that
	\begin{align*}
	\lim\limits_{k \to \infty} x_k = \lim\limits_{k \to \infty} P^kx_0 & = x_\infty ,
	\end{align*}
	for any initial value $x_0$. 
	
	Friedkin et al.\cite{fri16,par16} showed that a belief system with logic constraints will converge to equilibrium if and only if either ${{\lim_{k \to \infty} (\Lambda A)^k =0 }}$, or ${{\lim_{k \to \infty} (\Lambda A)^k \neq 0}}$ and ${{\lim_{k \to \infty} C^k }}$ exists. Moreover, if we represent the matrices $A$ and $\Lambda$ with a block structure as
	\begin{align*}
	A & = \left[\begin{array}{ c c}
	A^{11} & A^{12}\\
	0    & A^{22}
	\end{array}\right] \qquad \text{and} \qquad \Lambda = \left[\begin{array}{ c c}
	\Lambda^{11} & 0\\
	0    & I
	\end{array}\right],
	\end{align*}
	where $A^{22}$ is the subgraph of oblivious agents, then the belief system is convergent if and only if ${{\lim_{k \to \infty} C^k }}$ and ${{\lim_{k \to \infty} (A^{22})^k }}$ exists. We next consider how these conditions may be interpreted in terms of the topology of the network of agents and the set of logic constraints.

	The belief system in equation \eqref{consensus} converges to equilibrium if and only if every closed strongly connected component of the graph $\mathcal{P}$ is aperiodic~\cite{jac10,pro17}. Recall that a strongly connected component is closed if it has no incoming links from other agents; otherwise, it is called open, see Fig. \ref{fig:cond}. In general, the set of strongly connected components can be computed efficiently for large-complex networks\cite{tar72}.

	The matrix $P$ has two diagonal blocks, one corresponding to the initial beliefs and one involving the product $\Lambda A\otimes C$. The initial belief nodes are aperiodic closed strongly connected components, 
	each consisting of a single node. Therefore, the diagonal block in $P$ corresponding to the initial beliefs induces an aperiodic graph. Moreover, strongly connected components with stubborn agents do not affect the convergence of the belief system. Thus, one can focus on the closed strongly connected components of the graph induced by $A^{22} \otimes C$.
	
	Lemma~\ref{lemma:kron} characterizes the strongly connected components of the product of two graphs. Particularly, it shows that $A^{22}\otimes C$ can be written in a block upper triangular form, where each of the blocks in the diagonal is the product of one strongly connected component from the graph induced by $A^{22}$ and one from $\mathcal{T}$. 
	
	\begin{lemma}\label{lemma:kron}
		Given two graphs $\mathcal{G}_1$ and $\mathcal{G}_2$, every strongly connected component of the Kronecker product graph $\mathcal{G}_1 \otimes \mathcal{G}_2$ 
		is the result of the Kronecker product of a strongly connected component of $\mathcal{G}_1$ and a strongly connected component of $\mathcal{G}_2$. 
	\end{lemma}

\begin{proof} Let $A_1$ and $A_2$ denote the adjacency matrices for the graphs $\mathcal{G}_1$ and $\mathcal{G}_2$, respectively. {We can construct a condensation of the graph $\mathcal{G}$ by contracting every strongly connected component to a single vertex, resulting in a directed acyclic graph. Thus, a topological ordering is possible (see Cormen et al.~\cite{Cormen2009} Section $22.4$) and} there always exists two permutation matrices $P_1$ and $P_2$ such that we can rearrange the matrices $A_1$ and $A_2$ into a block upper triangular form where each of the blocks is a strongly connected component, that is
	\begin{align*}
	P_1'A_1P_1 = \left[\begin{array}{ c c c c}
	A_1^1 & * & * & *\\
	0 & A_1^2 &  * & * \\
	0 & 0 & \ddots & * \\
	0 & 0 & \hdots & A_{1}^{n_1}\\
	\end{array}\right] \qquad\ \text{and}\qquad\
	P_2'A_2P_2 = \left[\begin{array}{ c c c c}
	A_2^1 & * & * & *\\
	0 & A_2^2 &  * & * \\
	0 & 0 & \ddots & * \\
	0 & 0 & \hdots & A_{2}^{n_2}\\
	\end{array}\right].
	\end{align*}

	Moreover, define $P = P_1 \otimes P_2$ and by the properties of the Kronecker product, cf., Definition~\ref{def:kronecker}, it follows that
	\begin{align*}
	(P_1'A_1P_1)\otimes (P_2'A_2P_2)& = P'(A_1\otimes A_2) P,
	\end{align*}
	where $P$ is also a permutation matrix and
	\begin{align*}
	P'(A_1\otimes A_2) P = \left[\begin{array}{ c c c }
	A_1^1 \otimes A_2 &  *         & * \\
	0                 &  \ddots     &  * \\
	0                 &  \cdots     & A_{1}^{n_1}  \otimes A_2\\
	\end{array}\right] .
	\end{align*}
	
	Finally, by property $2$ in Definition  \ref{def:kronecker} there exists a permutation matrix $Q$ such that
	\begin{align*}
	Q' (P'(A_1\otimes A_2) P ) Q & = \left[\begin{array}{ c c c }
	A_2 \otimes A_1^1 &  *         & * \\
	0                 &  \ddots     &  * \\
	0                 &  \cdots     & A_2  \otimes A_{1}^{n_1}\\
	\end{array}\right] \\
	&= \left[\begin{array}{ c c c c c c c}
	A_2^1 \otimes A_1^1 &  *         & * & * & * & * & * \\
	0                 &  \ddots     &  * &  * & * & * & *\\
	0                 &  \cdots     & A_{2}^{n_2}  \otimes A_1^1 & * & * & * & *\\
	0                 &  \cdots     & 0                         & \ddots & * & *\\
	0                 &  \cdots     & \cdots                        & 0 & A_{2}^1  \otimes A_{1}^{n_1}  & * & *\\
	0                 &  \cdots     & \cdots                        & \cdots & 0 & \ddots  & *\\
	0                 &  \cdots     & \cdots                        & \cdots & \cdots & 0 &  A_{2}^{n_2}   \otimes A_{1}^{n_1}\\
	\end{array}\right]  .
	\end{align*}
	
	Therefore, every block in the upper triangular block diagonal form of the product of two adjacency matrices is the product of two strongly connected components, one from each graph.
\end{proof}

With Lemma~\ref{lemma:kron} at hand, the following result provides a graph-theoretic condition for the convergence of a belief system, complementing the algebraic criteria derived by Friedkin et al.\cite{fri16,par16}.
	
	\begin{theorem}
		The process~\eqref{consensus} converges to equilibrium if an only if every closed strongly connected component of the graph 
		$\mathcal{T}$ is aperiodic and every closed strongly connected component of the graph $\mathcal{G}$ composed by oblivious agents only is aperiodic. 
	\end{theorem}

\begin{proof}
		McAndrew~\cite{McAndrew1963} showed that the period of a product graph is the lowest common multiple of the periods of the two factor graphs (see Supplementary Definition~\ref{def:prod_graph} and Supplementary Theorem~\ref{thm:kron_directed}). If the factor graphs are not coprime, the resulting product graph is a disconnected set of components. Nevertheless, each of the resulting components will have the same period as defined above. Therefore, for a product graph to be aperiodic, we require the factors to be aperiodic as well. Thus, the desired result follows from Lemma~\ref{lemma:kron}.
\end{proof}

	In Fig.~\ref{networks1}, the network of agents has a single closed strongly connected component which consists of the node $4$. The network of truth statements also has a single closed strongly connected component, consisting of the node $3$. Thus, the belief system will converge to a set of final beliefs. 
	In Fig.~\ref{fig:ex1}, the belief system has one closed strongly connected component shown in green with the topology of a cycle graph. 
	This strongly connected component corresponds to the product of the cycle graph and the green node of the logic constraints. The cycle graph is aperiodic if and only if the number of nodes is odd. Thus, if the cycle network of agents has an even number of nodes, the belief system will not converge.

\subsection*{How long does a Belief System take to Converge?}

	We seek to characterize the time required by the process in equation~\eqref{consensus} to be arbitrarily close to its limiting value in terms of properties of the graphs $\mathcal{G}$ and $\mathcal{T}$, 
	such as the number of agents and truth statements, and the topology of the graphs. 
	
	We provide an estimate on the number of iterations required for the beliefs to $\epsilon$ apart from their final value (assuming they converge). This estimate is expressed in terms of the total variation distance, denoted by $\|\cdot\|_{TV}$
	(for its definition see the section on Methods). 
	For this we define the convergence time as follows:
	\begin{align*}
	t_{\text{mix}}(\epsilon) & = \min_{k\ge0} \left\lbrace k :  \cu{\max_{x_0 \in S_1(2nm)} }\|x_k - x_\infty\|_{TV} \leq \epsilon \right\rbrace, 
	\end{align*}
	where \cu{$\varepsilon = 1/4$ is a common choice, $S_1(n)  = \{ x \in [0,1]^n  \mid \sum_{i=1}^n x_i =1 \}$} and $x_k$ follows equation \eqref{consensus}. Informally, $t_{\text{mix}}(\epsilon)$ is the minimum number of social interactions required for the belief system to be arbitrarily close to its final value \cu{for the worst case} initial disagreement.
	
	The dynamics of the belief system in equation~\eqref{consensus} are closely related to the dynamics of a Markov chain with a transition matrix $P$\cite{Olshevsky2013}, specifically, the ergodic properties of a random walk over on the graph $\mathcal{P}$. Particularly, consider a random walk on the state space $\{1,\hdots,2nm\}$ which, at time $k$ jumps to a random neighbor of its current state. The relation between a random walk on a graph and the convergence properties of systems of the form of the belief system in~\eqref{consensus} has been previously explored in Olshevsky and Tsitsiklis~\cite{Olshevsky2013}. In both cases, we are interested in the convergence properties of $P^k$ as $k$ goes to infinity. If there is a limiting distribution for a Markov chain with transition probability $P$, then the belief system converges. Moreover, bounds on the convergence time based on the mixing properties of this Markov chain provide rates of convergence for the belief system.
	
	Next, we will show that the convergence time of a belief system is proportional to the maximum time required for a random walk, with transition probability matrix $P$, to get \textit{absorbed} into a closed strongly connected component in addition to the time needed for such component to \textit{mix} sufficiently. 
	
	\begin{lemma}\label{theorem:time}
		Let $\mathcal{P}$ be a graph with at least one closed strongly connected component, and assume all its closed strongly connected components are aperiodic. Also, let $L$ be the maximum expected coupling time of a random walk in a closed strongly connected component of $\mathcal{P}$. Moreover, let $H$ be maximum expected time for a random walk, starting at an arbitrary node, to get absorbed into a closed strongly connected component. Then, for $k \geq 4(L+H) \log( 1 /\epsilon)$, it holds for the belief system described in equation \eqref{consensus} that  $\|x_k - x_\infty\|_{TV} \leq \epsilon$.
	\end{lemma}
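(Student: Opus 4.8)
The plan is to reduce the claim to a row-wise total-variation estimate on the powers of $\mathcal{P}$ and then to prove that estimate by a coupling whose two phases are governed by $H$ and $L$. For the reduction: since every closed strongly connected component $C$ of $\mathcal{P}$ is aperiodic, the walk restricted to $C$ is irreducible and aperiodic with a unique stationary distribution $\pi_C$, so $\mathcal{P}^\infty=\lim_k\mathcal{P}^k$ exists and its $i$-th row is $\nu_i:=\sum_C p_{iC}\,\pi_C$, where $p_{iC}$ is the probability that a walk started at $i$ is absorbed into $C$ (and $\sum_C p_{iC}=1$, the graph being finite with a closed component). Since the dynamics \eqref{consensus} expresses each agent's belief at time $k$ as a convex combination of the initial beliefs with weights given by the corresponding row of $\mathcal{P}^k$, a routine computation gives $\|x_k-x_\infty\|_{TV}\le\max_i\|(\mathcal{P}^k)_{i,\cdot}-\nu_i\|_{TV}$; reading the $i$-th row of $\mathcal{P}^k$ as the law of the time-$k$ position of the walk $X$ started at $i$, it therefore suffices to bound, for each $i$, the total variation distance between $\mathcal{L}(X_k)$ and $\nu_i$ by $\epsilon$.

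For the coupling, fix $i$, run $X_0=i,X_1,\dots$, and let $T$ be its absorption time into whichever closed component $C$ it enters; then, reactively --- after observing $C$ and $X_T$ --- run jointly with $X$ a chain on $C$ started from $\pi_C$ under the optimal Markovian coupling for $\mathcal{P}$ restricted to $C$, coalescing with $X$ after a further time $\sigma$. Using this construction to produce a companion variable $Y$ with $\mathcal{L}(Y)=\nu_i$ --- its marginal comes out correct precisely because $X$ enters each component $C$ with probability exactly $p_{iC}$ --- the coupling inequality gives $\|(\mathcal{P}^k)_{i,\cdot}-\nu_i\|_{TV}\le\Pr(T+\sigma>k)$, while $\mathbb{E}[T]\le H$ and $\mathbb{E}[\sigma\mid\mathcal{F}_T]\le L$, so $\mathbb{E}[T+\sigma]\le H+L$ (aperiodicity of $C$ is exactly what forces the coupling on $C$ to coalesce). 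The reason for building the companion \emph{after} absorption, rather than committing it to a target component in advance, is that $T$ is then the genuine absorption time of $X$ --- controlled by $H$ --- instead of the hitting time of $X$ conditioned on its absorbing component, which can be arbitrarily large.

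Finally, at any time before coalescence the expected residual of $T+\sigma$ is still at most $H+L$ (at most $H$ to finish absorption, unconditioned, then at most $L$ to coalesce), so the strong Markov property together with Markov's inequality applied on blocks of length $2(H+L)$ gives $\Pr(T+\sigma>k)\le 2^{-\lfloor k/(2(H+L))\rfloor}$; for $k\ge 4(L+H)\log(1/\epsilon)$ this is at most $\epsilon$, and taking the maximum over $i$ finishes the proof. The main obstacle is the construction in the second paragraph: one must couple the walk with a stationary companion so that it is assigned the correct absorbing component \emph{without conditioning} $X$ (keeping the first phase timed by $H$) and so that coalescence inside that component is guaranteed (this is where the aperiodicity hypothesis enters, timing the second phase by $L$); everything else is routine bookkeeping.
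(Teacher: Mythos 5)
Your proof is correct, but it is organized differently from the paper's. The paper splits the analysis in two: first, for a node in a closed strongly connected component $S$ it couples a walk started from an arbitrary distribution with one started from $\pi_S$, gets $\|v'P_S^k-\pi_S\|_{TV}\le \max_v\mathbb{E}[K]/k$ by Markov's inequality, and upgrades the resulting $1/4$-mixing bound to $4L\log(1/\epsilon)$ via submultiplicativity; second, for nodes in an open component $M$ it writes the deterministic recursion $x^M_{k+1}=Zx^M_k+Ry_k$, bounds the survival mass $\|q_k\|_1<\epsilon$ after $4H\log(1/\epsilon)$ steps using the absorption time and Markov's inequality, and argues $x^M_k\to z_\infty$ with $z_\infty=Zz_\infty+Ry_\infty$; the additive $4(L+H)\log(1/\epsilon)$ bound is then asserted by combining the two phases. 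You instead run a single two-phase coupling on the whole chain --- absorption (timed by $H$) followed by coalescence with a stationary companion inside the absorbing component (timed by $L$) --- identify the limit rows explicitly as $\nu_i=\sum_C p_{iC}\pi_C$, and get the geometric tail $\Pr(T+\sigma>k)\le 2^{-\lfloor k/(2(H+L))\rfloor}$ by a restart/block argument. Your route buys a genuinely unified and arguably more rigorous justification of the additive $L+H$ constant (the paper's merging of the closed- and open-component estimates is the loosest step of its proof), while the paper's route exposes the linear-system structure and the fixed-point characterization of the limit on open components. Two small points you should tighten: for the coupling inequality to apply verbatim you must also define the companion $Y_k$ on the event $\{T>k\}$ (e.g.\ give it conditional law $\pi_C$ given the eventual absorbing component, or replace the construction by Jensen's inequality applied to the conditional laws given $\mathcal{F}_T$), and your restart argument needs $L$ and $H$ interpreted as maxima over arbitrary current states (respectively state pairs within a closed component), which is consistent with, but slightly stronger than, the ``maximum expected coupling time'' wording of the lemma; both are at or above the paper's own level of rigor.
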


\begin{proof}
	We use the coupling method to bound the convergence time of the belief system \cite{lin02}. Initially, we show that all opinions $x^i_k$, such that $i$ lies in a closed strongly connected component, will converge to some stationary point. Thus, in what follows we will find the required time to reach some $\epsilon$-consensus via coupling arguments, which in turn will provide the required time for a belief system to be $\epsilon$ close to its stationary distribution.
	
	Let $i$ be a node belonging to a closed strongly connected component $S$ and let $P_S$ be the matrix obtained by looking at the minor of $P$ corresponding to entries in $S$. If $S$ is closed then $P_S$ is row-stochastic, and Perron-Frobenius theory tells us there exists some vector $\pi_S$ such that
	\begin{align*}
	\pi_S' P_S = \pi_S'.
	\end{align*}
	
	Now, define two independent random walks $X = (X_k)_0^{\infty}$ and $Y = (Y_n)_0^{\infty}$ with the same transition matrix $P_S$. $X$ starts from the distribution $\pi_S$, and $Y$ from some other arbitrary stochastic vector $v$. Moreover, \textit{couple} the processes $Y$ and $X$ by defining a new process $W$ such that
	\begin{align*}
	W_k = \begin{cases}
	Y_k, & \text{if}\ k <K, \\
	X_k, & \text{if}\ k \geq K ,
	\end{cases}
	\end{align*}
	where $K = \min\left\lbrace k \geq 0 : Y_k = X_k\right\rbrace $ is  called the \textit{coupling time}. Each random walk moves according to $P_S$, so if we correlate them by moving them together after they intersect, we have not changed the fact that, individually, they move according to $P_S$. With this construction of the coupling\cite[Theorem~5.2]{lev09}, we have that
	\begin{align*}
	\|v'P^k_S - \pi_S\|_{TV} & \leq  \max_{v} \mathbb{P}\left\lbrace K > k\right\rbrace,
	\end{align*}
	and by the Markov inequality
	\begin{align*}
	\|v'P^k_S - \pi_S\|_{TV} & \leq  \frac{\max_{v} \mathbb{E}[K]}{k}.
	\end{align*}
	Therefore, to be at a distance of at most $1/4$ we require $k = 4 \max_v \mathbb{E}[K]$. We say the mixing time of the random walk is $4 L$ where we have that $L = \max_{v} \mathbb{E}[K]$ is the maximum expected time it takes for the random walks $X$ and $Y$ in $S$ to intersect. Then, it follows that in order to be $\epsilon$ close to the stationary distribution we require at least $k \geq 4L \log (1/\epsilon)$ steps\cite[Eq.~$4.36$]{lev09}, for any $v$. Therefore, we have shown that $x^i_k$ for $i$ in a closed strongly connected component $S$ converges to $\pi_S' x^S_0$ at a geometric rate. Here $x^S_0$ stacks those $x^i_0$ that belong to $S$.
	
	Now, consider the case where $i$ belongs to an open strongly connected component. Let $M$ be the set of states in such connected component. Stacking up $x^i_k$ over $i$ in $M$ into the vector $x^M_k$, observe that 
	\begin{align}\label{eq:nodes_open}
	x^M_{k+1} = Z x^M_k + Ry_k,
	\end{align}
	where $Z$ is strongly connected and substochastic, meaning some rows add up to less than $1$. The entries of $y_k$ come from nodes in other strongly connected components and the matrix $R$ represents how they influence the nodes in $M$. 
	
	Initially, assume that $y_k$ converges and call its limit $y_{\infty}$. Now, consider a random walk that moves around $M$ according to $Z$; the moment it steps out of $M$ into another strongly connected component we say it is absorbed by it since it can not return to $M$. 
	
	Let $q^i_k$ be the probability the walk is at state $i$ in $M$ at time $k$. Then
	\begin{align*}
	q'_{k+1} = {q_k}' Z ,
	\end{align*}
	and let $H_i$ be the expected time to get absorbed into any other strongly connected component, the set of nodes in $M$ is connected to, starting from node $i$ and let 
	\begin{align*}
	H^1 = \max_{i \in M} H_i.
	\end{align*}
	
	{If the absorbing strongly connected component is closed, then $H = H^1$. On the other hand, the absorbing strongly connected component will have some other absorbing time $H^2$, i.e., the time to get absorbed into another strongly connected component. Thus, the total absorbing time $H$ is the sum of the absorbing times of the strongly connected components on the longest path on the condensation of the graph $\mathcal{G}$ from an open strongly connected component to a closed strongly connected component. The condensation of the graph $\mathcal{G}$ is a directed acyclic graph and such path always exist.}

	By the Markov inequality, regardless of where the random walk starts, the probability that it takes more than $4 H$ iterations to get absorbed is at most $1/4$. Thus, for all $k \geq 4H \log (1/ \epsilon)$ steps we have that $\|q_k\|_1 < \epsilon$. 
	
	Now, let $z_{\infty}$ be the vector that satisfies
	\begin{align}\label{eq3}
	z_{\infty} = Z z_{\infty} + Ry_{\infty},
	\end{align}
	which we know exists since every eigenvalue of $Z$ must be strictly less than $1$ (since $Z^k \to 0$). If we define
	\begin{align*}
	\Delta_k = x^M_k - z_{\infty},
	\end{align*}
	then subtracting the updates of $x_M$ and $z_{\infty}$,
	\begin{align}
	\Delta_{k+1}= Z \Delta_k +  R(y_k - y_{\infty}). 
	\end{align}
	
	It follows that $\Delta_k$ goes to zero since we have assumed that $y_k \to y_{\infty}$, and $Z^k \to 0$.

	In conclusion, this argument shows that for all  $k \geq {4(L+H) \log (1/ \epsilon)}$ steps every node is within $\epsilon$ of its limiting value.
\end{proof}

\cu{Lemma~\ref{theorem:time} states that the convergence time of~\eqref{consensus} can be bounded by the absorbing time of a random walk on the graph $\mathcal{P}$ into a closed strongly connected component, in addition to the mixing time of that particular component. Moreover, the mixing time of a closed strongly connected component can be bounded by its coupling time, i.e., the time needed for two independent random walks, with arbitrary initial points, to intersect~\cite{lev09}.}
	
	Figure~\ref{fig:random_walk} illustrates the result in 	Lemma~\ref{theorem:time} by considering two random walks $X$ and $Y$ with the same transition matrix. Assuming the graph $\mathcal{P}$ is aperiodic, we denote by $L$ the maximum expected mixing time among all closed strongly connected components, and by $H$ the maximum expected time to get absorbed into a closed component. Then the belief system will be $\epsilon$ close to its limiting distribution after $O((L+H)  \log (1/\epsilon))$ steps. Therefore, not only do we have an estimate of the convergence time of the belief system in terms of the topology of the graph $\mathcal{P}$, but we also know this convergence happens exponentially fast. For example, in Fig.~\ref{fig:ex1}, the expected absorbing time is of the order of the number of nodes in the path, that is $m$, while the expected mixing time of a cycle graph is of the order of 
	the number of the nodes squared\cite{ger11,tah07,lev09}, which is $n^2$ in this example. 
	Thus, the convergence time for the belief system is $O(\max(n^2,m) \log (1/\epsilon))$. 
	Figure~\ref{fig:example1} depicts simulation results for this bound that demonstrate its validity. 
	In particular, Fig.~\ref{fig:example1}(a) shows how the convergence time changes when the number of nodes in the cycle graph increases, while Fig.~\ref{fig:example1}(b) shows how the convergence time changes when the number of truth statements in the directed path graph increases. Moreover, Fig.~\ref{fig:example1}(c) shows that the convergence to the final beliefs is exponentially fast.
	
	Lemma~\ref{lemma:kron} shows that each strongly connected component of the graph $\mathcal{P}$ is the product of two such components, one from the graph $\mathcal{G}$ and the other from the graph $\mathcal{T}$. Consequently, the next lemma shows that the expected mixing (or absorbing) time for a random walk on a product graph is the maximum of the expected 
	mixing (or absorbing) time of the individual factor graphs. 
	
	\begin{lemma}\label{lemma:mixing}
		{Consider two aperiodic strongly connected directed graphs $\mathcal{G}_1$ and $\mathcal{G}_2$. 
			The expected coupling time of two random walks on the graph $\mathcal{G}_1 \otimes \mathcal{G}_2$ is $L = 8\max\{L_1,L_2\}$, where $L_1$ and $L_2$ are the expected coupling times for random walks on the graphs $\mathcal{G}_1$ and $\mathcal{G}_2$ respectively. Similarly, a random walk on an open strongly connected component of a graph  $\mathcal{G}_1 \otimes \mathcal{G}_2$ has an expected absorbing time (into another strongly connected component) of $H = 8\max\{H_1,H_2\}$, where $H_1$ and $H_2$ are the expected absorbing times for random walks on the graphs $\mathcal{G}_1$ and $\mathcal{G}_2$ respectively. }
	\end{lemma}

\begin{proof}
	Say both graphs $\mathcal{G}_1$ and $\mathcal{G}_2$ are aperiodic and strongly connected, their product is also aperiodic and strongly connected and there exists a limiting distribution $\pi$ for a random walk moving on the Kronecker product graph $\mathcal{G}_1 \otimes \mathcal{G}_2$. 
	
	Consider a random walk $X = (X_k)_0^{\infty}$, on the graph $\mathcal{G}_1 \otimes \mathcal{G}_2$, with transition matrix $A_1 \otimes A_2$ starting with some arbitrary distribution $v$, where $A_1$ is the transition probability on a random walk on the graph $\mathcal{G}_1$ and $A_2$ is the transition probability on a random walk on the graph $\mathcal{G}_2$. Moreover, from the definition of the Kronecker product of graphs, we have that the state space of  $\mathcal{G}_1 \otimes \mathcal{G}_2$ is the Cartesian product $V = V_1 \times V_2$, composed by the ordered pairs $(i,j)$ for $i \in V_1$ and $j\in V_2$. Thus, the probability that the random walk $X$ jumps from the node $(i,j)$ to the node $(\bar i ,\bar j)$ is $[A_1]_{i, \bar i} [A_2]_{j, \bar j} $.
	
	Following the coupling method, define another random walk  $Y = (Y_k)_0^{\infty}$ with the same transition matrix $A_1 \otimes A_2$ but starting at the stationary distribution $\pi$. Now, construct an new random walk as follows:
	\begin{align*}
	W_k = \begin{cases}
	Y_k, & \text{if}\ k <K, \\
	X_k, & \text{if}\ k \geq K ,
	\end{cases}
	\end{align*}\
	where  $K = \min\left\lbrace k \geq 0 : Y_k = X_k\right\rbrace $. Clearly, if the state of the random walk $X$ at time $k$ is $X_k = (i_k, j_k)$ and the state of the random walk $Y$ at time $k$ is $Y_k = (\bar i_k, \bar j_k)$, then the condition $Y_k = X_k$ implies that  $i_k = \bar i_k $ and $j_k = \bar j_k$. Thus, the coupling time $K$ can alternatively be expressed in terms of the two separate conditions  $i_k = \bar i_k $ and $j_k = \bar j_k$, which in turn represents the coupling conditions for two separate random walks on each individual coordinate where each coordinate represents one of the factor graphs. Therefore, we write the coupling time between the random walks $X$ and $Y$ as $K = \min\left\lbrace k \geq 0 : Y_k = X_k\right\rbrace  = \min\left\lbrace k \geq 0 : i_k = \bar i_k ,j_k = \bar j_k \right\rbrace $ which is equivalent to
	\begin{align*}
	K & = \min\left\lbrace k \geq 0 : Y_k = X_k\right\rbrace  \\
	&= \min\left\lbrace k \geq 0 : i_k = \bar i_k ,j_k = \bar j_k \right\rbrace \\
	& = \max \left\lbrace  \min\left\lbrace k \geq 0 : i_k = \bar i_k \right\rbrace, \min\left\lbrace k \geq 0 : j_k = \bar j_k \right\rbrace \right\rbrace  \\
	& = \max \{K_1,K_2\},
	\end{align*}
	where $K_1$ and $K_2$ are the coupling times for the graphs $\mathcal{G}_1$ and $\mathcal{G}_2$ respectively. Thus,
	\begin{align*}
	\mathbb{P}\left\lbrace K > k\right\rbrace & = \mathbb{P}\left\lbrace \max \{K_1,K_2\} > k\right\rbrace \\
	& \leq \mathbb{P}\left\lbrace K_1 \geq k\right\rbrace +  \mathbb{P}\left\lbrace K_2 \geq k\right\rbrace,
	\end{align*}
	where the last inequality follows from the union bound.
	
	Note that given that the initial state of the random walk $X$ is $v$, the random walks on each of its coordinates have some well defined initial state, $v_1(i) = \sum\limits_{j\in V_2} v((i,j))$ and   $v_2(j) = \sum\limits_{i\in V_2} v((i,j))$, where $v_1(i)$ is the probability of staring in node $i \in V_1$, $v_2(j)$ is the probability of starting in node $j \in V_2$, and $v((i,j))$ is the probability of the random walk $X$ to start in the node $(i,j)$.
	
	It follows from Theorem 5.2 in Levin et. al.\cite{lev09} that
	\begin{align*}
	\|v'(A_1 \otimes A_2)_S - \pi\|_{TV} & \leq  \max_{v} \mathbb{P}\left\lbrace K > k\right\rbrace \\
	&\leq  \max_{v_1 } \mathbb{P}\left\lbrace K_1 > k\right\rbrace +  \max_{v_2 } \mathbb{P}\left\lbrace K_2 > k\right\rbrace \\ 
	&\leq  \max_{v_1 } \frac{\mathbb{E}[K_1]}{k} +  \max_{v_2 }\frac{\mathbb{E}[K_2]}{k}\\ 
	&=  \max_{v_1 } \frac{L_1}{k} +  \max_{v_2 }\frac{L_2}{k}. 
	\end{align*}
	
	Thus,  in order to be at a distance at most $1/4$ from the stationary distribution we require $k \geq 8\max \{L_1,L_2\}$. Moreover, in order to be $\epsilon$ close to the stationary distribution we require at least $k \geq 8\max \{L_1,L_2\} \log (1/\epsilon)$ steps in the random walk for any initial state $v$. Finally, the coupling time of $X$ is $L = O(\max\{L_1,L_2\})$.
	
	A similar argument follows for the absorbing time of a random walk on a transient component defined by a product graph requires both coordinates be absorbed individually, thus $H = O(\max\{H_1,H_2\})$.	
\end{proof}

	Lemmas~\ref{theorem:time}~and~\ref{lemma:mixing} provide an explicit characterization of the convergence time in terms of the components of the network of agents and the network of logic constraints. Thus allows us to state our graph-theoretic result on the convergence rate of a belief system with logic constraints.

	\begin{theorem}\label{thm:how}
		Assume the process~\eqref{consensus} converges to equilibrium. Moreover, let $L_{\mathcal{T}}$ and $H_{\mathcal{T}}$ be the maximum expected coupling time and maximum absorbing time of the closed aperiodic and strongly connected components of the graph $\mathcal{T}$, and let $L_{\mathcal{G}}$ and $H_{\mathcal{G}}$ be the maximum expected coupling time and absorbing time of a closed aperiodic and strongly connected components of the graph $\mathcal{G}$ composed by oblivious agents only. Then, for $k \geq 32(\max\{L_{\mathcal{T}},L_{\mathcal{G}}\}+\max\{H_{\mathcal{T}},H_{\mathcal{G}}\}) \log( 1 /\epsilon)$, it holds for the belief system in~\eqref{consensus} that  $\|x_k - x_\infty\|_{TV} \leq \epsilon$.	
	\end{theorem}

\begin{proof}
	The proof follows from Lemmas~\ref{lemma:kron},~\ref{theorem:time},~\ref{lemma:mixing}
\end{proof}

	Table~\ref{tab:summary_results} presents the estimates for the convergence time for belief systems composed of well-known classic graphs, see Supplementary Fig.\ref{fig:ex_torus3_star2} for plots of some of these common graphs. We use the existing results about the mixing time for these graphs (see Supplementary 
	Table~\ref{tab:summary_results2} for a detailed list of references on each of the studied graphs) to provide an estimate of the convergence time of the resulting belief system when all agents are oblivious. \cu{When available, we present tighter upper bounds for the mixing times on strongly connected components derived with methods other than coupling.} Particularly, our method allows the direct characterization of the dynamics of a belief system when large-scale complex networks are involved. For example, we provide convergence time bounds for the case where networks follow random graph models, namely: the geometric random graphs, the Erd\H{o}s-R\'enyi random graphs, and the Newman-Watts small-world networks. These graphs are usually considered for their ability to represent the behavior of complex networks encountered in a variety of fields\cite{wat99,bar03,gan09,bor06} (see Supplementary Fig.~\ref{fig:random_graphs}).

	Figure~\ref{fig:example3} shows experimental results for the convergence time of a belief system for a subset of the graphs given in Table~\ref{tab:summary_results}. For every pair of graphs, we show how the convergence time increases as the number of agents or the number of truth statements change. One can particularly observe the maximum-like behavior on the convergence time as predicted by the theoretical bounds in Theorem \ref{thm:how}. See Supplementary Fig.~\ref{fig:complexities} and Supplementary Fig.~\ref{fig:rand_complexities} for additional numerical results on other combinations of graphs from Table~\ref{tab:summary_results}, and Supplementary Fig.~\ref{fig:convergence} and Supplementary Fig.~\ref{fig:rand_conv} for their linear convergence rates, respectively.
	
	Finally, the next theorem describes how the existence of a clique of a well-connected subset of nodes can guarantee fast mixing of a random walk on a graph.
	
	\begin{theorem}\label{theo:influential}
		Consider a random walk on a connected undirected and static graph $\mathcal{G} = (V,E)$ with $|V| = n$ nodes, and assume there is a subset $\bar V \subset V$ with $M$ nodes such that after $K$ steps, the probability of being in any node in $\bar V$ is at least $\frac{1}{5M}$. Then the mixing time of the corresponding Markov chain is of the order $O(MK \log (1/\epsilon))$.
	\end{theorem}
	
	\begin{proof}
		The proof follows immediately since any two random walks will intersect with probability $\frac{1}{M}$ every $K$ steps.
	\end{proof}

\subsection*{Where Does a Belief System Converge?}

	So far we have discussed the conditions for convergence of a belief system and the corresponding convergence time. Convergence implies the existence of a vector $x_\infty$ where the set of beliefs settles as the number of interactions increases. Particularly, Proskurnikov and Tempo~\cite{pro17} characterize the limiting distribution as a solution of 
	\begin{align*}
	X_{\infty} & = \Lambda A X_{\infty} C' + (I - \Lambda)X_0,
	\end{align*}
	which can be intractable to compute when the matrices $A$ and $C$ are large. We are interested in a characterization of this limit vector that admits a rapid computation of its value.
	
	Lemma~\ref{lemma:kron} shows that one can always group the nodes in the graph $\mathcal{P}$ into open and closed strongly connected components. Moreover, their convergence value depends on whether a node is part of a closed or open strongly connected component of the graph. Thus, our result on the convergence value of a belief system will be stated for nodes in closed or open strongly connected components. However, we start by introducing some notation. Define $x_k^S$ as the vector obtained from $x_k$ by taking only the components of $x_k$ corresponding to the nodes in the set $S$. Moreover, let $P_S$ be the minor of the matrix $P$ obtained by taking into account only the nodes in the set $S$. Then, $P_S$ corresponds to the transition matrix of an irreducible and aperiodic Markov chain with a stationary distribution $\pi_S$, where $\pi_S' P_S = \pi_S'$. The vector $\pi_S$ is effectively the left-eigenvector of the matrix $P_S$ corresponding to the eigenvalue $1$~\cite{lev09}.
	
	Particularly, for nodes in a closed strongly connected component $S$, it follows that $x_{k+1}^S = P_S x^S_{k}$. Whereas for open strongly connected component $S$, we have that $x^S_{k+1} = Z x^S_k + Rx^{S_M}_k$, where $Z$ is strongly connected and substochastic, meaning some rows add up to less than $1$, and $R$ represents how the incoming nodes to the set $S$, denoted by $S^M$, influence the nodes in $S$.
	
	\begin{theorem}\label{thm:where}
		Assume the process~\eqref{consensus} converges to equilibrium. Let $S$ be a strongly connected component of the system graph $\mathcal{P}$, with factors $A_S$ and $C_S$, i.e.,  $P_S = A_S\otimes C_S$. If $S$ is closed, then,
			\begin{align*}
			\lim_{k \to \infty} x_k^i  & = (\pi_{A_S} \otimes \pi_{C_S})' (x_0^{A_S} \otimes x_0^{C_S}) \qquad \forall i\in S.
			\end{align*}
		Moreover, if $S$ is open with edges coming from the set of nodes $S^M$, then
		\begin{align*}
		\lim_{k \to \infty} x_k^i = \sum_{j \in S^M} p_{ij} x^{j}_\infty \qquad \forall i \in S, 
		\end{align*}
		where $p_{ij}$ is the probability of absorption of a random walk starting at node $i$ into a node $j \in S^M$ with limiting value $x^j_\infty$. 
	\end{theorem}

\begin{proof}
	It follows form Lemma~\ref{lemma:kron} that every strongly connected component of $\mathcal{P}$ is the product of two strongly connected components, one from the network of agents and one from the logic constraint network. Thus, \mbox{$P_S = A_S \otimes C_S$} for some matrices $A_S$ and $C_S$ (sub-matrices of $A$ and $C$ respectively),
	which implies that $\pi_S = \pi_{A_S} \otimes \pi_{C_S}$, i.e., the vectors $\pi_S^A$ and $\pi_S^C$ are the corresponding left eigenvalues of the factor components of $P_S$ associated with the eigenvalue $1$. 
	
On the other hand, without loss of generality, assume that $\lim_{k \to \infty}x_{k}^{S_M} = x_{\infty}^{S_m}$. Therefore, we can analyze the dynamics in the open strongly connected component $S$ as follows: Initially define the following two systems
	\begin{align*}
	\bar x^S_{k+1} & = Z \bar x^S_k + Rx_{\infty}^{S_M}, \qquad \text{and} \qquad x^S_{k+1} = Z x^S_k + Rx_{k}^{S_M}.
	\end{align*}
	
	It follows that 
	\begin{align*}
	\lim_{k \to \infty} (\bar x^S_{k+1} - x^S_{k+1} ) & = Z  \lim_{k \to \infty} (\bar x^S_{k} - x^S_{k} ) + R \lim_{k \to \infty} (x_{\infty}^{S_M},  -x_{k}^{S_M} ) =
 Z  \lim_{k \to \infty} (\bar x^S_{k} - x^S_{k} ) .
	\end{align*}
	Moreover, given that $Z$ is substochastic, the magnitude of its eigenvalues are strictly less than $1$ and $1-Z$ is invertible. Thus, we can conclude that $\lim_{k \to \infty} \bar x^S_{k} = \lim_{k \to \infty}x^S_{k} $.
	
	Stacking the vector $\bar x^S_k$ and $x_{\infty}^{S_M}$ into a single vector we obtain the following recursion:
	\begin{align*}
	\left[ \begin{array}{c c}
	\bar x^S_{k+1}  \\
	x_{\infty}^{S_M}
	\end{array}\right]  & = Q_S \left[ \begin{array}{c c}
	\bar x^S_{k}  \\
	x_{\infty}^{S_M}
	\end{array}\right], \qquad \text{where} \qquad
	Q_S  = 
	\left[ \begin{array}{c c}
	Z & R \\
	0 & I
	\end{array}\right].
	\end{align*}
	
	Thus, in order to find the limit value of the set of beliefs in $S$ we can focus on the analysis of the powers of the matrix $Q^S$. 
	
	We have that 
	\begin{align*}
	\lim_{k \to \infty} Q_S^{k} & = 
	\left[ \begin{array}{c c}
	0 & N R \\
	0 & I
	\end{array}\right] ,
	\end{align*}
	where $N = I + Z + Z^2 + \cdots = (1 - Z)^{-1}$. The matrix $NR$ is the absorbing probability matrix, see Chapter~$3$ in Kemeny and Snell~\cite{kemeny1983finite}, where $p_{ij} \triangleq [NR]_{ij}$ is the probability of being absorbed by into the node $j \in S^M$ starting from node $i \in S$. Moreover, it follows that for any node $i\in S$ 
	\begin{align*}
	\lim_{k \to \infty} x_k^i = \sum_{j \in S^M} p_{ij} x^{j}_\infty. 
	\end{align*}
\end{proof}
	
  Theorem~\ref{thm:where} shows that the final beliefs of those nodes in closed strongly connected components are a weighted average of the initial beliefs in that component, and the weights (sometimes referred to as the social power) are determined by the product of the left-eigenvectors of the factors $A_S$ and $C_S$. On the other hand, the limiting value of nodes in an open strongly connected components is a convex combination of the limiting values of the nodes from incoming edges. Moreover, their weights are defined by the absorption probabilities.

\subsection*{Numerical Analysis of Social Networks}

	Next, we provide a numerical analysis for the evolution of belief systems with social network structures from \textit{large-scale networks} in the Stanford Network Analysis Project (SNAP)\cite{graphs_data}, see Fig.~\ref{fig:graphs_real}, and logic constraints built from random graph generating models. Random graph generating models, such at the Erd\H{o}s-R\'enyi graphs, the Newman-Watts graph, and the geometric random graphs, have been proposed to model the dynamics and the properties of real large-scale complex networks, for example, relatively fast mixing or linear convergence of the beliefs. We use the \texttt{wiki-Vote}~\cite{les10}, \texttt{ca-GrQc}~\cite{les07}, and \texttt{ego-Facebook}~\cite{les12} graphs as social networks and a binary tree, a Newman-Watts graph, and an Erd\H{o}s-R\'enyi graphs as logic constraints. 
	
	The \texttt{wiki-Vote} network represents the aggregation of $2794$ elections where $7115$ Wikipedia contributors assign votes to each other to select administrators. This generates a directed social network where the edges are the votes given by the users. The \texttt{ca-GrQc} network represents the general relativity and quantum cosmology collaboration network for e-prints from arXiv. The nodes are composed of $5242$ authors, and edges represent co-authorship of a manuscript between two authors. Finally, the \texttt{ego-Facebook} network represents an anonymized set of Facebook users as nodes and edges indicate friendships among them in the Facebook platform. Table~\ref{tab:graphs} shows the description of the networks used. In the three cases, we select the largest strongly connected component of the graph and use it as a representative of the network structure and the mixing properties of the graph. Furthermore, we assume that the agents use equal weights for all their (in)neighbors.

	Figure \ref{fig:mix_real} shows the convergence time of a belief system when the network of agents is each the three large-scale complex networks described in Table \ref{tab:graphs}. Figure \ref{fig:mix_real} considers a simplified scenario where a single closed strongly connected component composes the social network of agents and the network of logic constraints. Therefore, absorbing time is effectively zero and the mixing time of the belief system is the maximum between the mixing time of the social network and the mixing time of the network of logic constraints. Convergence is guaranteed since both networks are taken to be aperiodic by introducing positive self-weights to every agent. Results show that the predicted maximum type behavior holds; that is, the convergence time of the belief system is upper bounded by the maximum mixing time of a random walk on the graph of agents and the graph of logic constraints. The convergence time remains constant and of the order of the convergence time of the network of agents, until the mixing time of the network formed by the logic constraints is larger. Then, the total convergence time increases based on the specific topology of the graph of logic constraints. Figure~\ref{fig:ex4_distance} shows the exponential convergence rate of the belief system described in Figure~\ref{fig:mix_real}. 

\section*{Discussion}

In a recent paper, Friedkin et al. \cite{fri16} proposed a new model that integrates logic constraints into the evolution opinions of a group of agents in a belief system. Logic constraints among truth statements have a significant impact on the evolution of opinion dynamics. Such restrictions can be modeled as graphs that represent the favorable or unfavorable influence the beliefs on specific topics have on others. Starting from this context, we have here approached this model from its extended representation of a belief system, where opinions of all agents on all topics as well as their corresponding initial values are nodes in a larger graph. This larger graph is composed of the Kronecker product of the graphs corresponding to the network of agents and the network of logical constraints respectively.
	
	In this study, we have provided graph-theoretic arguments for the characterization of the convergence properties of such opinion dynamic models based on extensive existing knowledge of convergence and mixing time of random walks on graphs using the theory of Markov chains. We have shown that convergence occurs if every strongly connected component of the network of logic constraints is aperiodic and every strongly connected component of oblivious agents is aperiodic as well. Moreover, to be arbitrarily close to their limiting value we require $O((L+H)\log (1 / \epsilon))$ time steps. The parameter $L$ is the maximum coupling time for a random walk among the closed strongly connected components of the product graph, and $H$ is the maximum time required for a random walk, that starts in an open component, to get absorbed by a closed component. Our analysis applies to broad classes of networks of agents and logic constraints for which we have provided bounds regarding the number of nodes in the graphs. Finally, we show that the limiting opinion value is a convex combination of the nodes in the closed strongly connected components and this convergence happens exponentially fast. 
	
	Our framework offers analytical tools that deepen our abilities for modeling, control and synthesis of complex network systems, mainly human-made, and can inspire further research in domains where opinion formation and networks interact naturally, such as neuroscience and social sciences. Finally, extending this analysis to other opinion formation models that use different aggregating strategies may require further study of Markov processes and random walks.

\section*{Methods}

\subsection*{Directed Graphs~\cite{McAndrew1963}}
	
	 We define a directed graph $\mathcal{G}$ as a set of nodes $V$ and a set of edges $E$ where the elements of $E$ are ordered pairs $(j,i)$ with $i,j \in V$. A path $\mathbf{P}$ of $\mathcal{G}$ is a finite sequence $\{p_i\}_{i=0}^{l}$ such that $(p_i,p_{i+1}) \in E$ for $0 \leq i \leq l-1$. Moreover, define $n(\mathbf{P})$ as the number of edges in the path $\mathbf{P}$. A pair of nodes $(i,j)$ are \textit{strongly connected} if there is a path from $i$ to $j$ and from $j$ to $i$. We say a directed graph $\mathcal{G}$ is \textit{strongly connected} if each pair of nodes of $\mathcal{G}$ are strongly connected. A cycle $\mathbf{C}$ of a graph $\mathcal{G}$ is a path $\mathbf{P}$ such that $p_0 = p_l$, i.e., the start and end nodes of the path are the same.  We denote the \textit{period} of a directed graph as $d(\mathcal{G})$, and define it as the greatest common divisor of the length of all cycles in the graph $\mathcal{G}$.
	
	\subsection*{Random Walks, Mixing and Markov chains}\label{method:mix}
	
	Consider a finite directed graph $\mathcal{G}=(V,E)$ composed by a set $V$ of nodes with a set of edges $E$ and a compliant associated row-stochastic matrix $P$, called the transition matrix. A random walk on the graph $\mathcal{G}$ is the event of a token moving from one node to an out-neighbor according to some probability distribution determined by the transition matrix. The dynamics of the random walk are modeled a Markov chain $X = (X_k)_0^{\infty}$ such that $\mathbb{P}\{X_{k+1}=j|X_k = i\}= P(i,j)$ with $i,j \in V$. This Markov chain is called \textit{ergodic} if it is irreducible and aperiodic. For an ergodic Markov chain, there exists a unique stationary distribution $\pi$, which describes the probability that a random walk visits a particular node in the graph as the time goes to infinity, that is $\mathbb{P}\{X_k = j\} \to \pi_j$ as $k \to \infty$. The stationary distribution is invariant for the transition matrix, that is $\pi' P = \pi'$. It follows that the convergence to the stationary distribution of a random walk reduces to analyzing powers of $P$ (Theorem $4.9$ in Levin et al.\cite{lev09}).
	
	The distance to stationarity at a time $k$, i.e., after $k$ transitions of the Markov Chain, or $k$ steps in the random walk, is defined as
	\begin{align*}
	d(k) & = \max_{x \in \Omega} \|P^k(x,\cdot) - \pi\|_{TV},
	\end{align*}
	where $\|\mu - \nu\|_{TV}$ is the \textit{total variation distance} between two probability distributions $\mu$ and $\nu$, defined as
	\begin{align*}
	\|\mu - \nu\|_{TV} & = \sup_{{\rm events}~ A \in \mathcal{F}} | \mu(A) - \nu(A) |.
	\end{align*}
	
	Moreover, the mixing time of the Markov chain is
	\begin{align*}
	t_{\text{mix}}(\epsilon) & = \min_{k\ge 0} \{k : d(k) \leq \epsilon \},
	\end{align*}
	and we say the Markov chain has (relatively) rapid mixing if $t_{\text{mix}}(\epsilon)  = \text{poly}(\log n, \log \frac{1}{\epsilon})$, i.e., polynomial relations in the terms $\log n$ and $\log(1/\epsilon)$. Finally, the mixing time can be bounded in terms of the left eigenvalues of the matrix $P$ as
	\begin{align}\label{eq:upper}
	\frac{\lambda_2}{2(1-\lambda_2)} \log\left(\frac{1}{2\epsilon} \right) & \leq t_{\text{mix}}(\epsilon) \leq \frac{\log n + \log (1 / \epsilon)}{1 -\lambda_2},
	\end{align} 
	where $\lambda_2$ is the left-eigenvalue of the transition matrix $P$ with the largest abstolute value\cite{dia91}.
	
	\subsection*{The Coupling Method}\label{methods:coupling}
	
	The technical advances in this paper are mostly made by using the coupling method, which is a way to bound the mixing time of Markov chains. Consider two independent Markov chains $X = (X_k)_0^{\infty}$  and  $Y= (Y_k)_0^{\infty}$, with the same transition matrix $P$. Then, define the \textit{coupling time} $K$ as the smallest $k$ such that $X_k = Y_k$, that is, $K = \min_{k\geq0} \{X_k = Y_k\}$. Note that $K$ is a random variable and it depends on $P$ as well as the initial distributions of the processes $X_k$ and $Y_k$. Finally, define the quantity $L$ as the maximum expected coupling time of a Markov chain with transition matrix $P$ over all possible initial distributions of the processes $X_k$ and $Y_k$, i.e.,
	\begin{align*}
	L & = \max_{u,v} \mathbb{E}[K] \qquad \text{where} \qquad X_0 = u \  \text{and}  \ Y_0 = v.
	\end{align*}
	In words, this $L$ is the maximum expected time it takes for two random walks, with the same transition matrix and arbitrary initial states, to intersect. If we assume $X$ starts from a distribution $\pi$, and $Y$ from some other arbitrary stochastic vector $v$ and we \textit{couple} the processes $Y$ and $X$ by defining a new process $W$ such that
	\begin{align*}
	W_k = \begin{cases}
	Y_k, & \text{if}\ k <K \\
	X_k, & \text{if}\ k \geq K 
	\end{cases}
	\end{align*}
	The key insight of the coupling method is that $W_k$ is identically distributed to $X_k$; this follows by conditioning on the events $K \leq k$ and $K > k$. Therefore, questions about the distribution of $X_k$ can be solved by considering $W_k$ instead. 
	
	By starting the chain $X_k$ in the stationary distribution, these considerations imply that 
	\begin{align*}
	\|v'P^k - \pi\|_{TV} & \leq   \mathbb{P}\left\lbrace K > k\right\rbrace, 
	\end{align*} because if $K\leq k$ then $W_k=Y_k$; for more details, see Lindvall~\cite{lin02}. Thus, it follows by the Markov inequality that
	\begin{align*}
	\|v'P^k - \pi\|_{TV} & \leq  \frac{\mathbb{E}[K]}{k}.
	\end{align*} 
	
	Setting $k=2 \mathbb{E}[K]$ implies that \mbox{$\|v'P^k - \pi\|_{TV} \leq 1/2$}. Thus, it follows that after  $T = O(L \log (1/\epsilon))$ steps, it holds that ${{\|v^T P^T - \pi \|_1 \leq \epsilon}}$, for any $v$, and $\pi$ being the stationary distribution of the Markov chain. Since $||p-q||_{TV} = (1/2) ||p-q||_1$~\cite{lev09}, the same applies to the quantity $\|v'P^k - \pi\|_{1}$.

	The coupling method is the primary technical tool we use in this work. In Supplementary Note 3, we use the coupling method to bound the convergence time of equation~\eqref{individual_model} in terms of the coupling times on the underlying social network and on the logic constraint graph.  Because coupling time over the Kronecker product is, up to a multiplicative constant, the maximum of the coupling times, this allows us to analyze the effect of the social network and logic constraint graph on convergence time separately.

	\newpage
	
	\begin{table}[ht]
		\centering
		\begin{tabular}{|ccc|}  
			\hline
			\multirow{ 2}{*}{\textbf{Network of Agents}} & \multirow{ 2}{*}{\textbf{Logic Constraints}}& \multirow{ 2}{*}{\textbf{Convergence Time} $\approx$} \\
			&  & \\ 
			\hline
			Cycle     & Directed Path                & $ \max(n^2,m) $        \\ 
			Cycle     & Path                & $\max(n^2,m^2) $        \\ 
			Dumbbell Graph      & Complete Binary Tree      & $\max(n^2,m)$        \\ 
			$k$-d Hypercube $\{0,1\}^k$   & Complete Binary Tree      & $\max(k \log k,m)$        \\ 
			$2$-d Grid            & Star                & $n \log n $     \\ 
			$3$-d Grid            & Two Joined Star    & $n^{2/3} \log n$       \\ 
			$k$-d Grid            & Star              & $ k^2n^{2 / k} \log n $  \\ 
			$2$-d Torus            &     $2$-d Grid                 & $ \max(n^2,m \log m) $  \\ 
			$3$-d Torus            & Star                 & $n^2 $  \\ 
			$k$-d Torus            & $k$-d Grid                & $ \max(n^2k \log k,k^2m^{2 / k} \log m) $  \\ 
			Lollipop            & Star               & $ n^2 $  \\ 
			Dumbbell                & Star                 & $ n^2 $  \\ 
			Eulerian: $d$-degree and expansion                & Dumbbell                & $ \max(|E|^2,m^2) $  \\ 
			Eulerian: $d$-degree, max-degree weights              & Dumbbell                & $ \max(n^2d,m^2) $  \\ 
			Lazy Eulerian with degree $d$-degree          & Dumbbell                & $ \max(n |E|,m^2) $  \\ 
			Lamplighter on $k$-Hypercube        & Bolas        & $  \max(k 2^k,m^3)  $       \\ 
			Lamplighter on $(k,n)$-Torus            & Bolas        & $ \max( k n ^ k,m^3)  $       \\ 
			Geometric Random on $\mathbb{R}^d$: $\mathcal{G}^d(n,r)$        & Bolas        & $  \max(r^{-2}\log n,m^3)  $       \\ 
			Geometric Random: $r =\Omega(\text{polylog}(n))$        & Bolas        & $ \max(\text{polylog}(n),m^3)  $       \\ 
			Erd\H{o}s-R\'enyi: $\mathcal{G}(n,c/n)$, $c>1$            &  Dumbbell   & $\max ( \log^2 n, m^2 ) $       \\ 
			Erd\H{o}s-R\'enyi: $\mathcal{G}(n,c/n)$, $c>1$            &  Newman-Watts   & $\max ( \log^2 n, \log^2 m) $       \\ 
			Erd\H{o}s-R\'enyi: $\mathcal{G}(n,(1+\delta)/n)$, $\delta^3n \to \infty$            &  Dumbbell   & $ \max ( (1/ \delta^3)\log^2 (\delta^3n), m^2 ) )$       \\ 
			Erd\H{o}s-R\'enyi: $\mathcal{G}(n,1/n)$    &  Dumbbell   & $\max ( n, m^2 )$       \\ 
			Newman-Watts : $\mathcal{G}(n,k,c/n)$, $c>0$      &  Path   & $ \max ( \log^2 n, m^2 )  $ \\  
			Expander    & Path                & $ m^2 $        \\  
			Any Connected Undirected Graph   & \multirow{ 2}{*}{Expander}                &\multirow{ 2}{*}{$ n^2 $  }         \\ 
			with Metropolis Weights   &                 &        \\ 
			Any Connected Undirected Graph    & Expander                & $|E| \text{diam}(\mathcal{G}) $        \\  
			\hline
		\end{tabular}
		\caption{\textbf{Convergence time for the belief system with logic constraints for different networks of Agents with $n$ nodes and networks of truth statements with $m$ nodes.} The approximated maximum expected convergence time identified as $\approx$ should be understood in terms of the order $O(\cdot)$, that is, an estimate up to constant terms. Additionally, all the estimates provided should be multiplied by the accuracy term $\log (1/\epsilon)$.}
		\label{tab:summary_results}
	\end{table}

	\bgroup
	\def\arraystretch{1.1}
	\begin{table}[ht]
		\centering
		\resizebox{\columnwidth}{!}{
			\begin{tabular}{|cccccp{4cm}|}  
				\hline
				\multirow{ 2}{*}{\textbf{Graph}} & \multirow{ 2}{*}{\textbf{Nodes}} & \multirow{ 2}{*}{\textbf{Edges}}  & \multirow{ 2}{*}{\textbf{Type}} &  \textbf{Upper Bound on} & \multirow{ 2}{*}{\textbf{Description}}     \\
				&               &        &       &\textbf{Mixing Time} &    \\
				\hline 
				\texttt{wiki-Vote} \cite{les10}          & $1300$               & $103663 $        & Directed&$145$ &Wikipedia who-votes-on-whom network \\ 
				\texttt{ca-GrQc}  \cite{les07}           & $4158$               & $13428$          & Undirected& $12308$&Collaboration network of arXiv General Relativity\\ 
				\texttt{ego-Facebook}\cite{les12}        & $3927$               & $88234$          & Undirected& $53546$&Social circles from Facebook\\ 
				\hline
			\end{tabular}
		}
		\caption{\textbf{Datasets of large-scale networks.} Description, number of nodes, number of edges, simulated mixing time and an upper bound on the mixing time of the three datasets used in the numerical analysis. The upper bound on the mixing time is computed from the second largest eigenvalue bound in equation~\eqref{eq:upper}.}
		\label{tab:graphs}
	\end{table}
	
\begin{figure}[ht]
		\centering
		\subfigure[]{
			\includegraphics{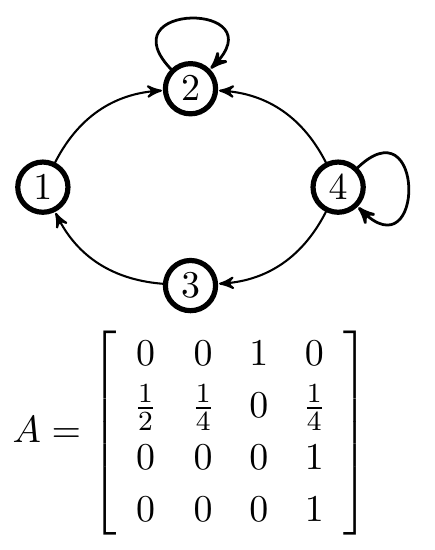}
		}
		\subfigure[]{
			\includegraphics{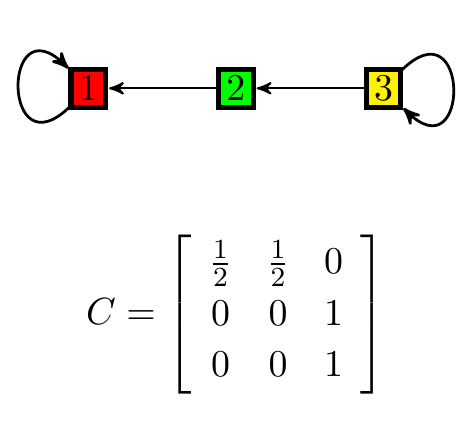}
		}
		\subfigure[]{
			\includegraphics{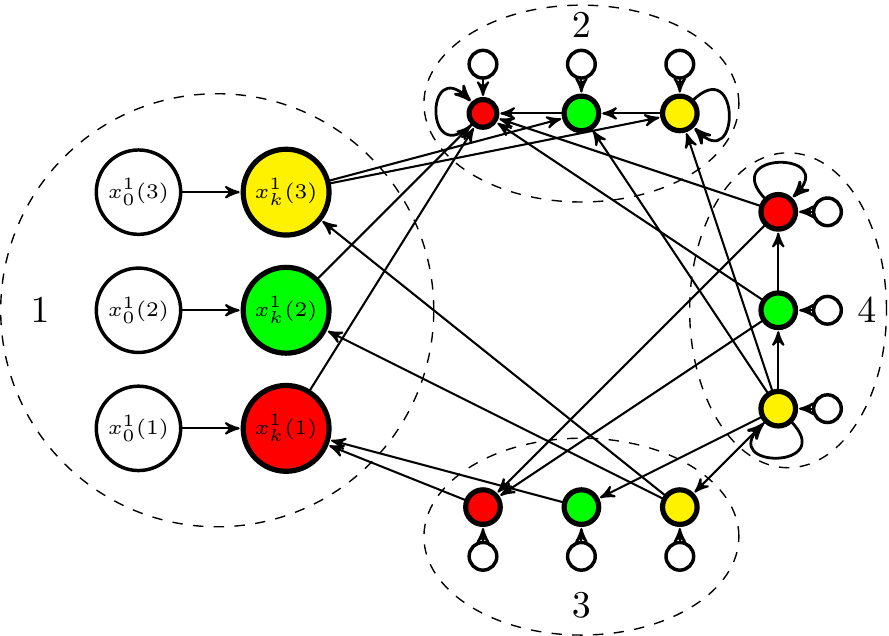}
		}
		\caption{\textbf{A belief system with $4$ agents and $3$ truth statements}. (a) Agents are represented as nodes/circles, numbered from $1$ to $4$, and the network of influences among them is shown as edges between nodes. The truth statements or topics are color-coded, e.g., the truth statement $1$ is represented as a red square. Agent $2$ is influenced by its own opinion and agents $4$ and $1$, agent $1$ follows the opinion of agent $3$ which in turn follows the opinion of agent $4$, agent $4$ follows its own opinion only. A possible matrix $A$ for this social network is shown below the graph. This indicates that agent $2$ assigns a higher weight of $\tfrac{1}{2}$ to the opinion of agent $1$ than the weight it assigns to the opinion of communicated by agent $4$. (b) The truth statement $1$ is influenced by the belief that statement $2$ is true, statement $2$ directly follows the belief in statement $3$. A possible matrix $C$ for this set of logic constraints is shown below the graph. The belief that the truth statement $1$ is true is influenced (with a weight of $\tfrac{1}{2}$) by the opinion that the truth statement $2$ is true. (c) The beliefs system, see equation \ref{consensus}, composed by the agent's interaction graph and the logic constraints.}
		\label{networks1}
	\end{figure}

\begin{figure}[ht]
		\centering
		\subfigure[]{
			\includegraphics{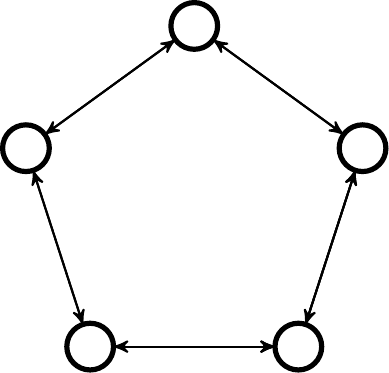}
		}
		\subfigure[]{
			\includegraphics{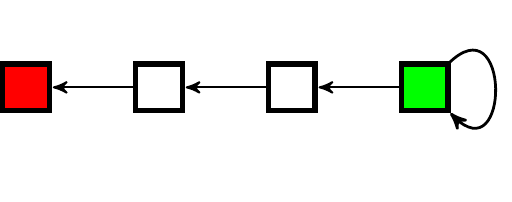}
		}
		\\
		\subfigure[]{
			\includegraphics{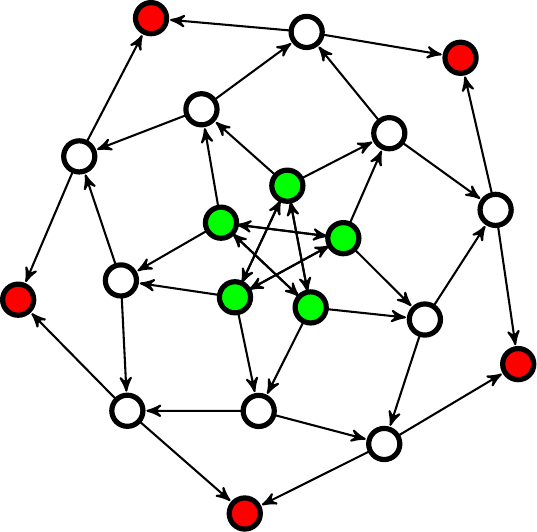}
		}
		\\
		\subfigure[]{
			\includegraphics{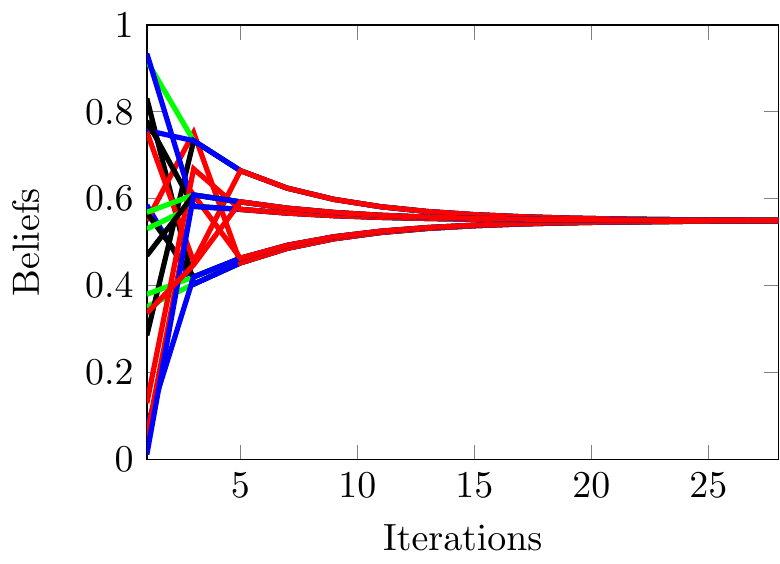}
		}
		\subfigure[]{
			\includegraphics{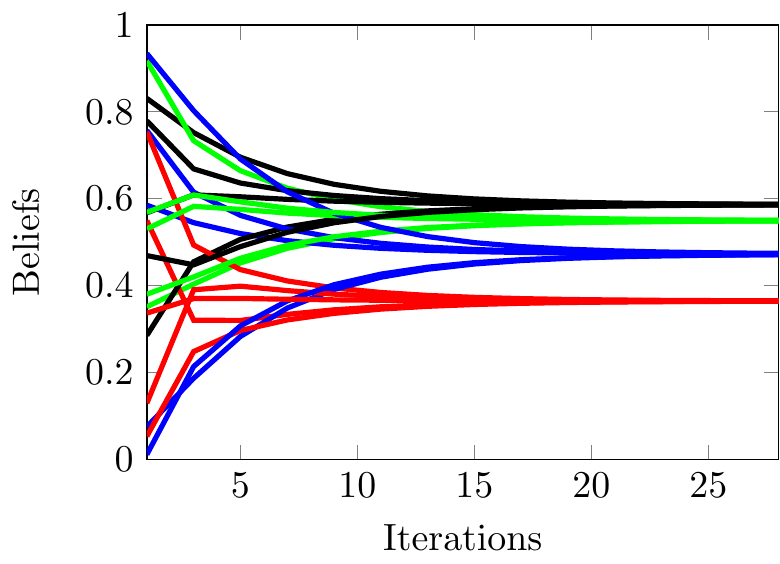}
		}
		\caption{\textbf{A belief system with agents on a cycle graph and logic constraints on a path graph.} (a) A network of $5$ oblivious agents forming a cycle graph. (b) A set of $4$ truth statements with logic constraints forming a path graph. (c) The belief system graph $\mathcal{P}$. (d) The belief dynamics with logic constraints. (e) The belief dynamics with no logic constraints. The beliefs of all agents have been color coded per truth statement. The agents reach an agreement on each of the truth statements.}
		\label{fig:ex1}
	\end{figure}

	\begin{figure}[ht]
		\centering
			\includegraphics{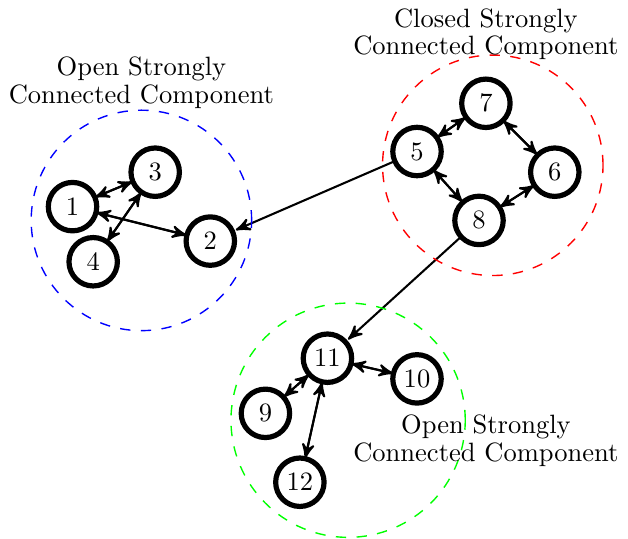}
		\caption{\textbf{Open and closed strongly connected components of a graph}. A graph with $12$ nodes and $3$ strongly connected components. The strongly connected component composed of nodes $5$, $6$, $7$ and $8$ is closed since it has no incoming edges from other components. }
		\label{fig:cond}
	\end{figure}

	\begin{figure}[ht]
		\centering
		\includegraphics{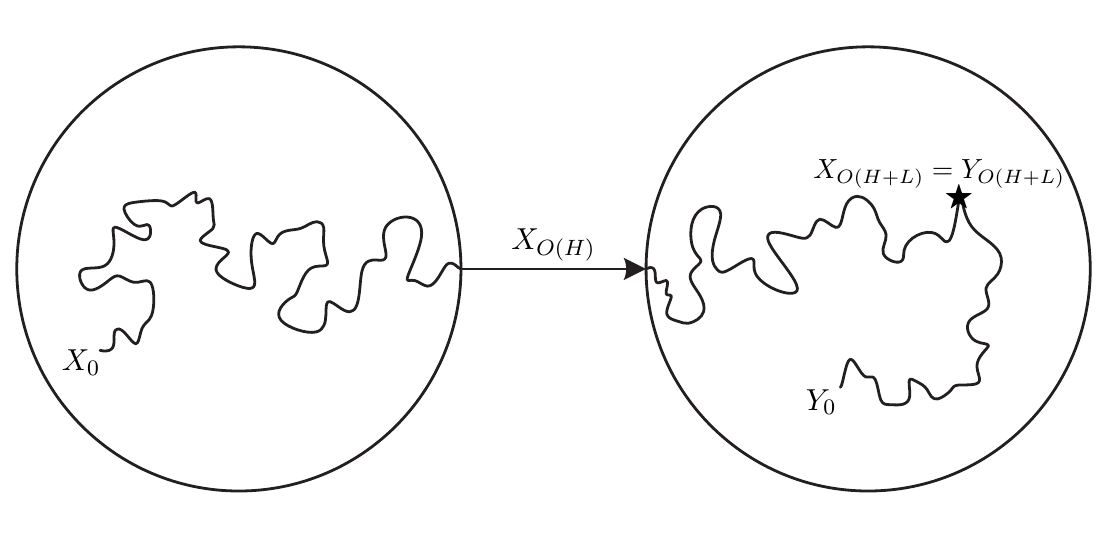}
		\caption{\textbf{Coupling of two random walks.} A random walk starts at $X_0$ in a transient state and evolves according to some transition matrix $P$; after $O(H)$ time steps (the absorbing time), it gets absorbed into a closed connected component. Then, after $O(L)$ time steps (the mixing time) it crosses paths with another random walk $Y_k$ starting at $\pi$ the stationary distribution of $P$. Then after $O((L+H)\log 1 / \epsilon)$ time steps, the random walk $X_0$ is arbitrarily close to its limit value. Note that the random walk moves in the opposite direction to the edges in the graph.}
		\label{fig:random_walk}
	\end{figure}

\begin{figure}[ht]
		\centering
		\subfigure[]{
			\includegraphics{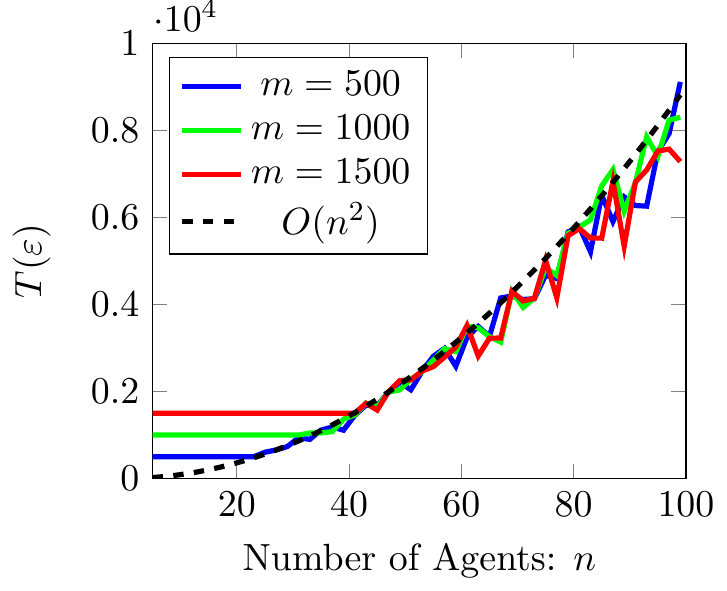}
		}
		\subfigure[]{
			\includegraphics{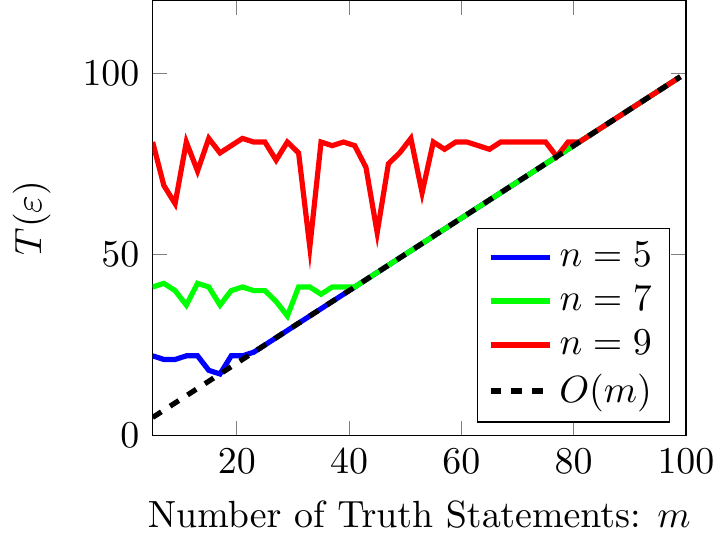}
		}
		\\
		\subfigure[]{
			\includegraphics{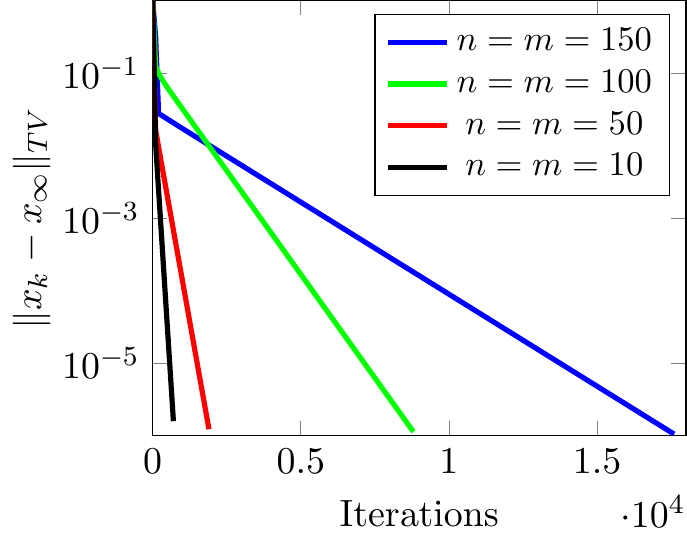}
		}
		\caption{\textbf{Convergence time for a belief system with an undirected cycle as a social network 
				and a directed path as a network for the logic constraints.} (a)~Varying the number of the agents in the social graph. (b)~Varying the number of the truth statements for a directed path. (c)~The exponential convergence rate of the belief system.}
		\label{fig:example1}
	\end{figure}

	\begin{figure}[ht]
		\centering
		\subfigure[]{
			\includegraphics{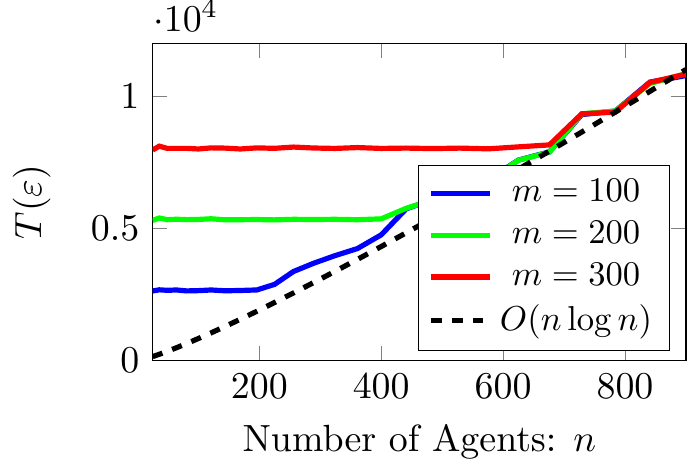}
		}
		\subfigure[]{
			\includegraphics{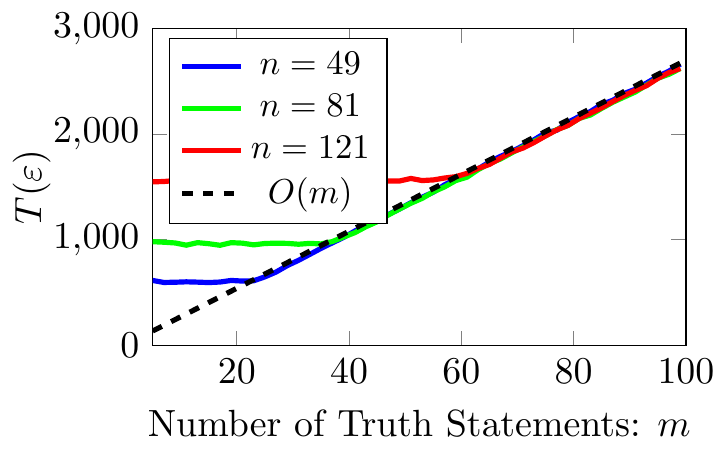}
		}
		\\
		\subfigure[]{
			\includegraphics{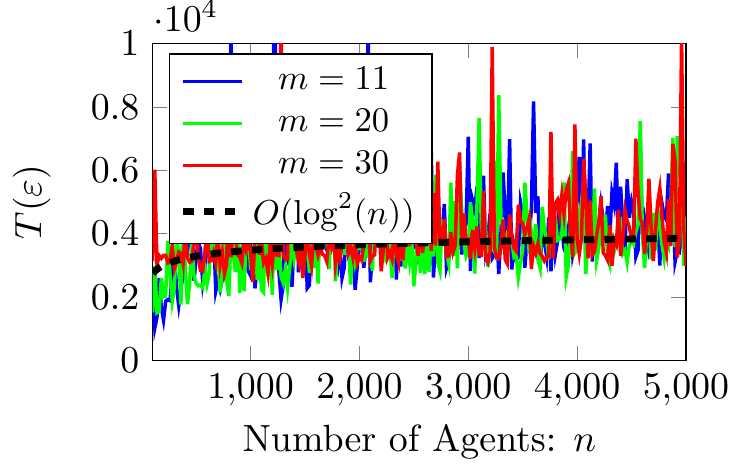}
		}
		\subfigure[]{
			\includegraphics{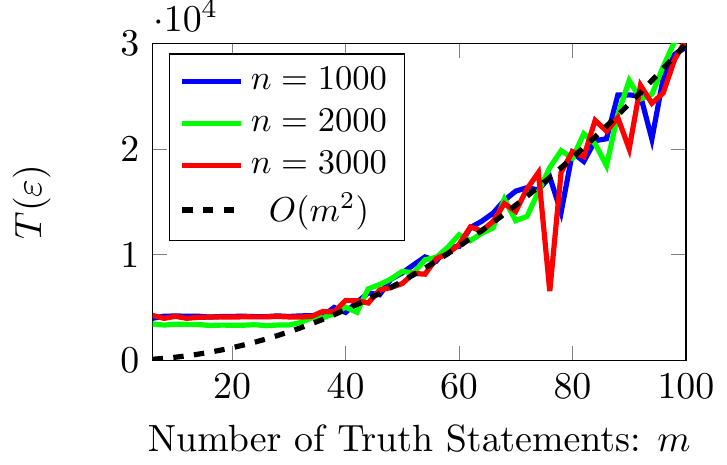}
		}
		\\
		\subfigure[]{
			\includegraphics{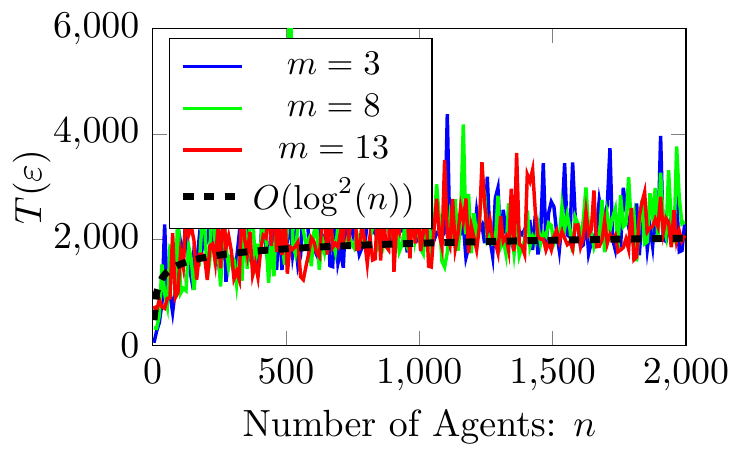}
		}
		\subfigure[]{
			\includegraphics{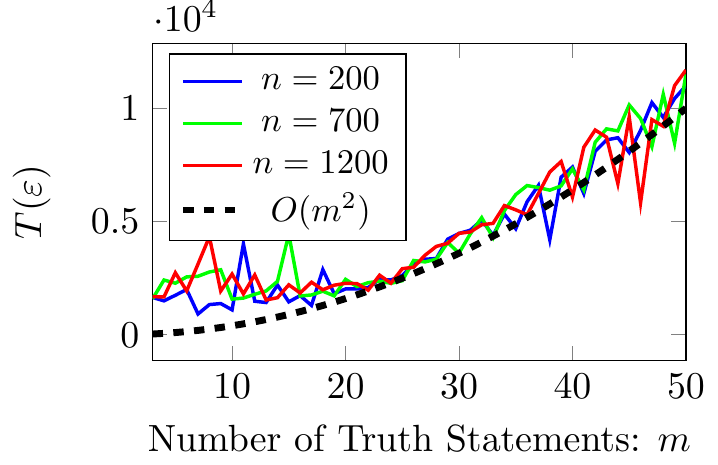}
		}
		\caption{\textbf{Convergence time or various belief systems.} (a) Varying the number of agents on a $2$d-Grid with fixed the number of truth statements on a star graph. (b) Varying the number of truth statements on a star graph with a fixed number of agents on a $2$d-Grid. (c) Varying the number of agents on a \mbox{\protect Erd\H{o}s-R\'enyi} graph with fixed the number of truth statements on a  dumbbell graph. (d) Varying the number of truth statements on a dumbbell graph with a fixed number of agents on a  \mbox{\protect Erd\H{o}s-R\'enyi} graph. (e) Varying the number of agents on a \mbox{\protect Newman-Watts} small-world graph with fixed the number of truth statements on a path graph. (f) Varying the number of truth statements on a path graph with a fixed number of agents on a \mbox{\protect Newman-Watts} small-world graph.}
		\label{fig:example3}
	\end{figure}

	\begin{figure}[ht]
		\centering
		\subfigure[]{
			\includegraphics[trim={3cm 2cm 2.5cm  2cm},clip,width=0.45\textwidth]{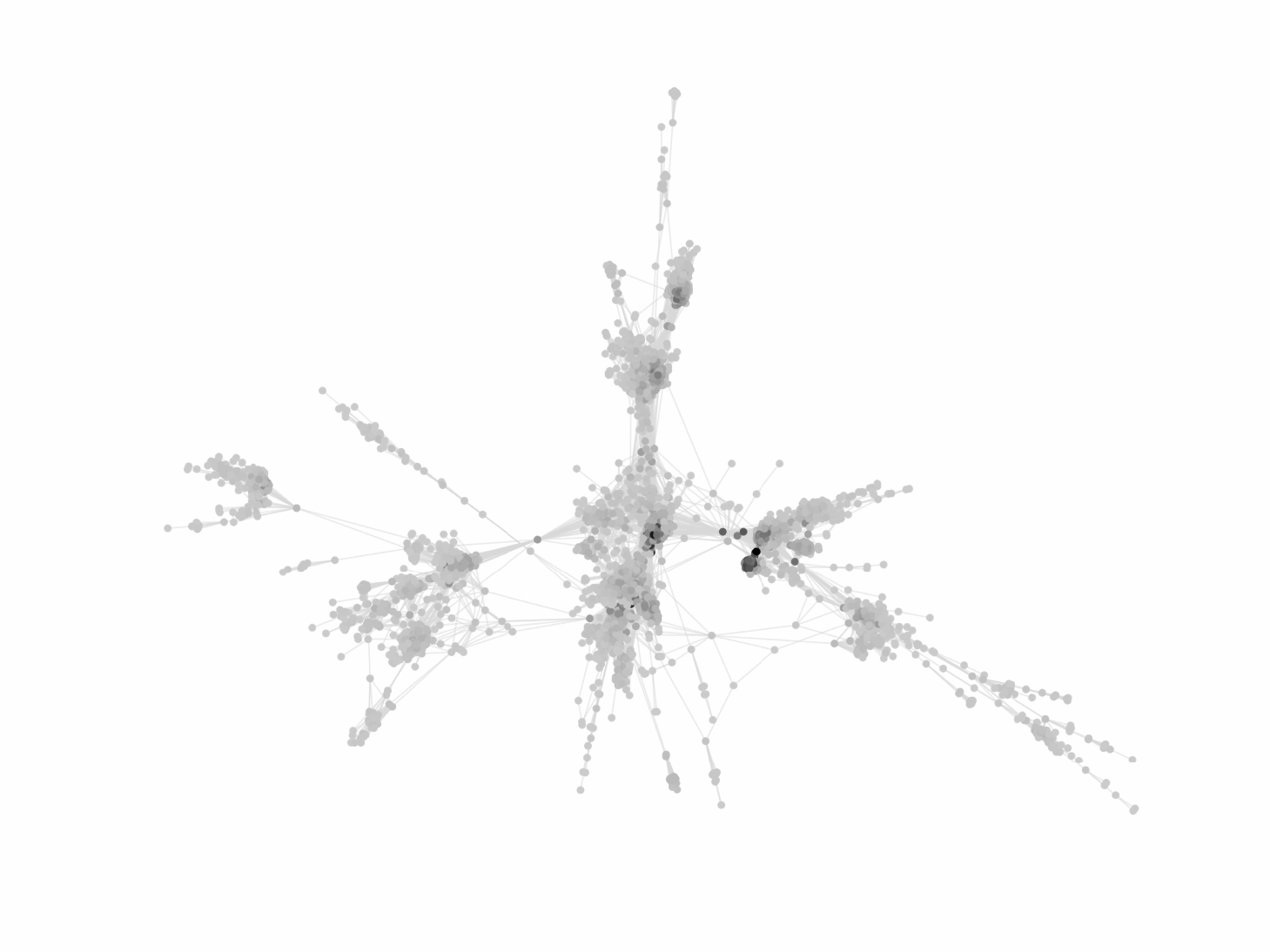}
		}
		\\
		\subfigure[]{
			\includegraphics[trim={3cm 2cm 2.5cm  2cm},clip,width=0.45\textwidth]{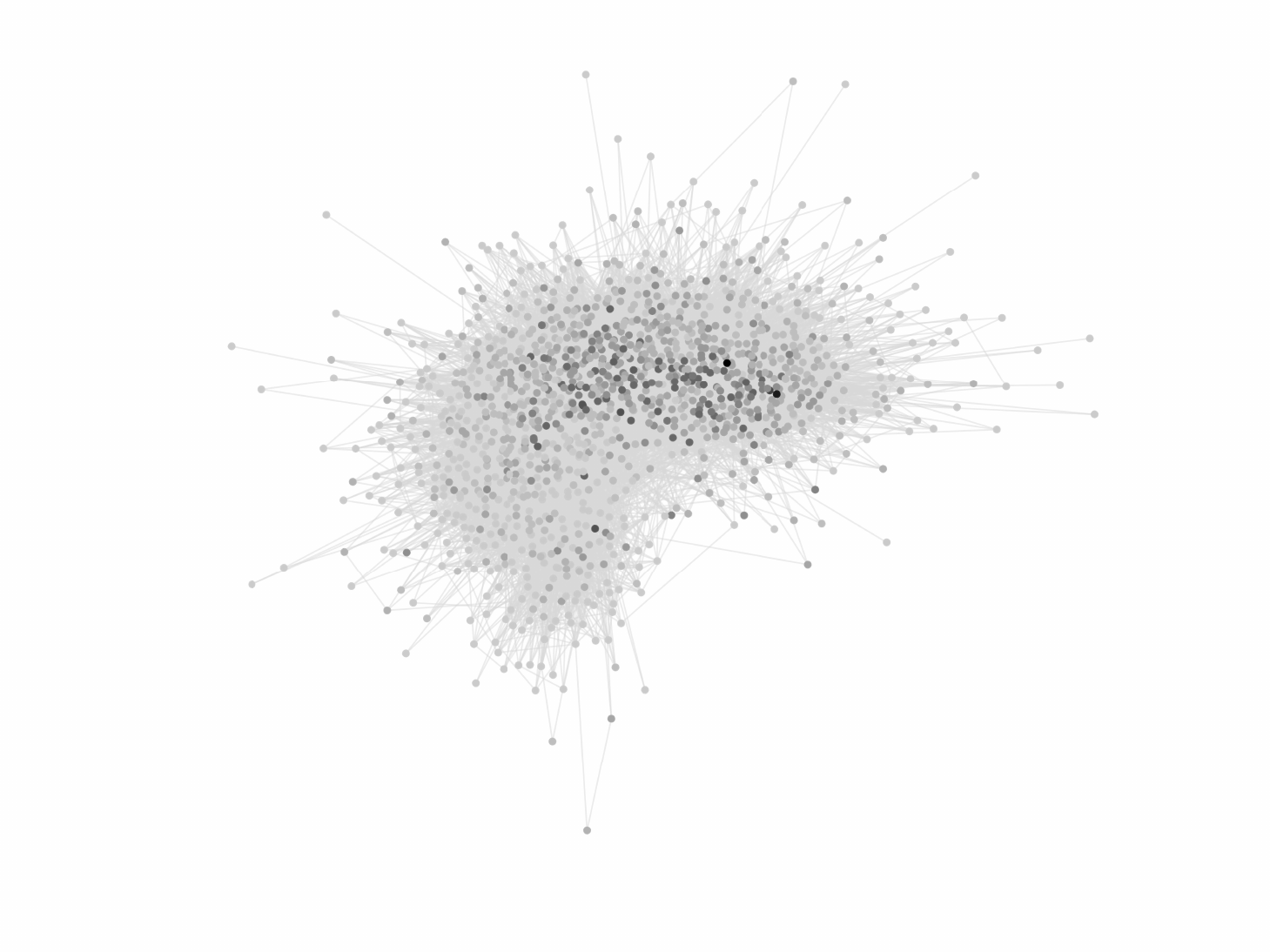}
		}
		\subfigure[]{
			\includegraphics[trim={3cm 2cm 2.5cm  2cm},clip,width=0.45\textwidth]{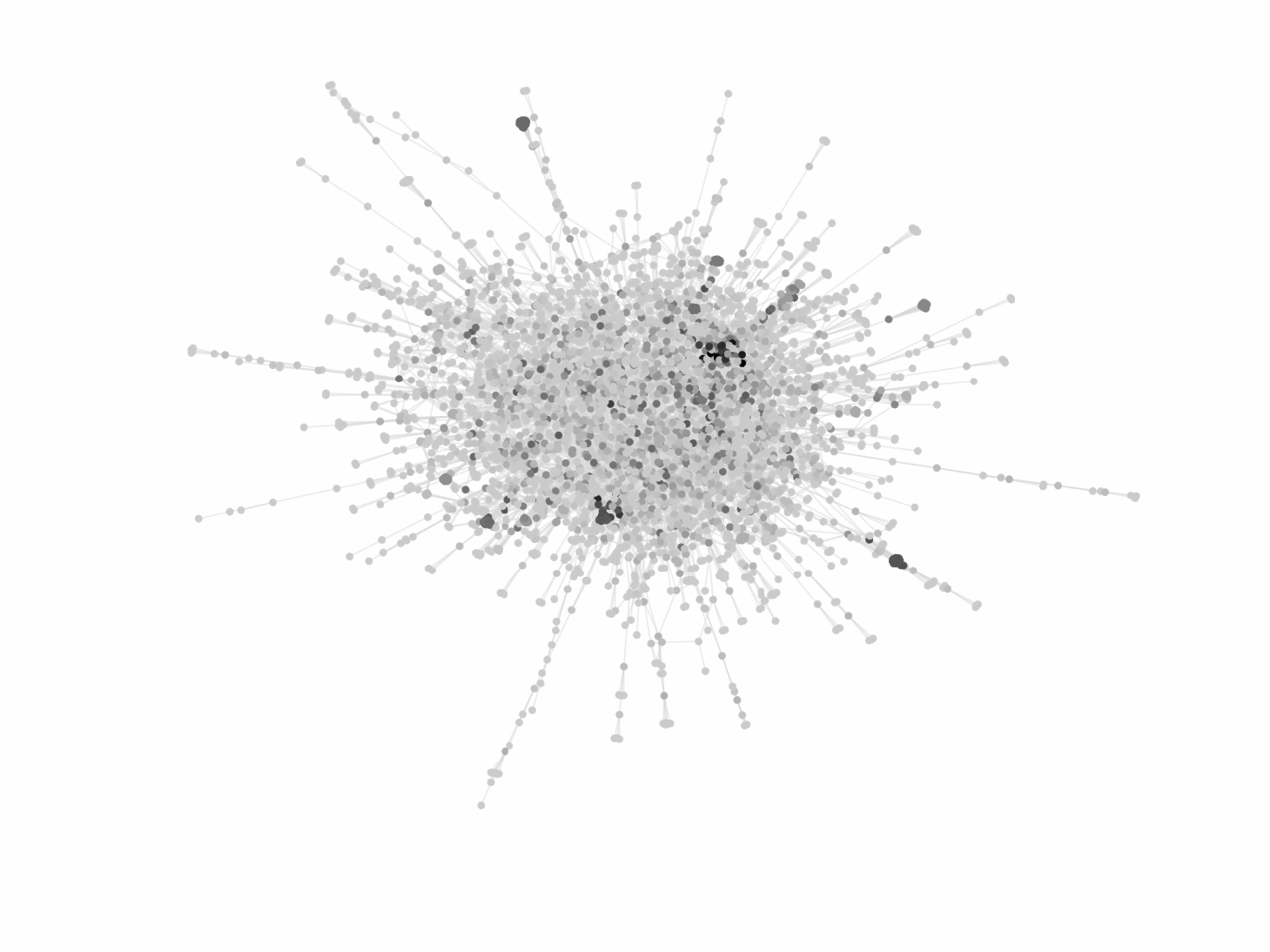}
		}
		\caption{\textbf{Large-Scale complex networks from the Stanford Network Analysis Project (SNAP).} (a)~The \texttt{ego-Facebook}, nodes are anonymized users from Facebook and edges indicate friendship status between them.  (b)~The \texttt{wiki-Vote} graph, each node represents a Wikipedia administrator and an directed edge represents a vote used for promoting a user to admin status.  (c)~The \texttt{ca-GrQc} graph is a collaboration network from arXiv authors with papers submitted to the General Relativity and Quantum Cosmology category, edges indicated co-authorship of a manuscript. The gray scale in the node colors shows the relative social power according to the left-eigenvector corresponding to the eigenvalue $1$.}
		\label{fig:graphs_real}
	\end{figure}

\begin{figure}[ht]
		\centering
		\subfigure{
	\includegraphics{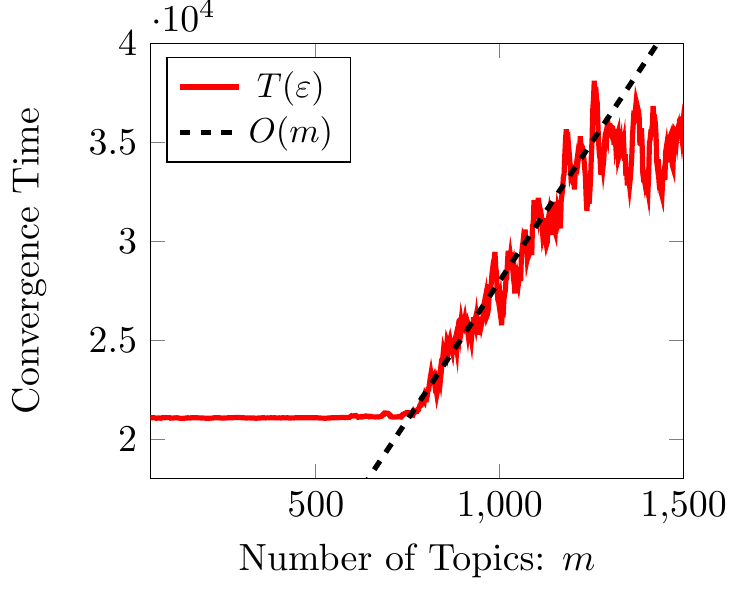}
}
\subfigure{
	\includegraphics{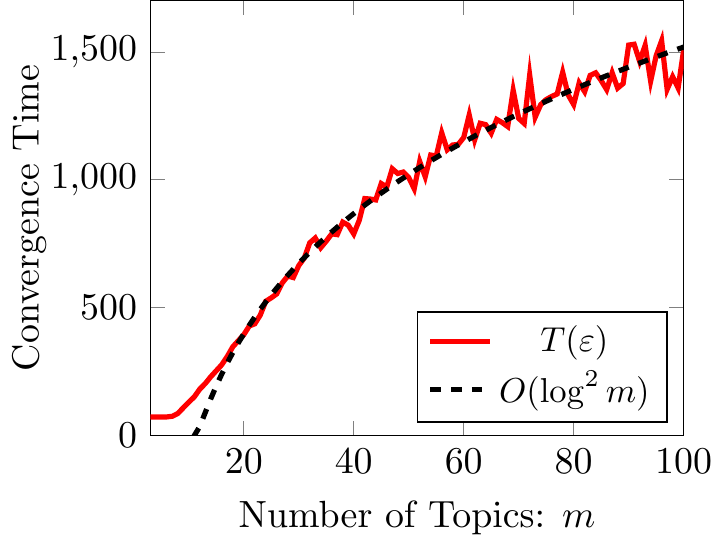}
}
\\
\subfigure{
	\includegraphics{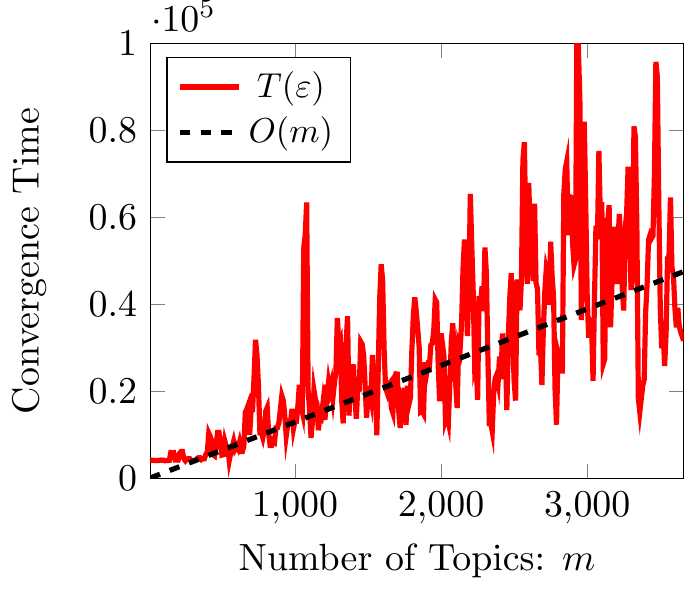}
}
		\caption{\textbf{Convergence time of a belief system over a large-scale complex network.} (a) The social network is the \texttt{ego-Facebook} graph and the logic constraints form a complete binary tree with an increasing number of topics. (b) The social network is the \texttt{wiki-Vote} graph and the logic constraints form Newman-Watts small-world graph with an increasing number of topics. (c) The social network is the \texttt{ca-GrQc} arXiv collaboration graph, and the logic constraints form an Erd\H{o}s-R\'enyi graph with an increasing number of topics.}
		\label{fig:mix_real}
	\end{figure}

\begin{figure}[ht]
		\centering
\includegraphics{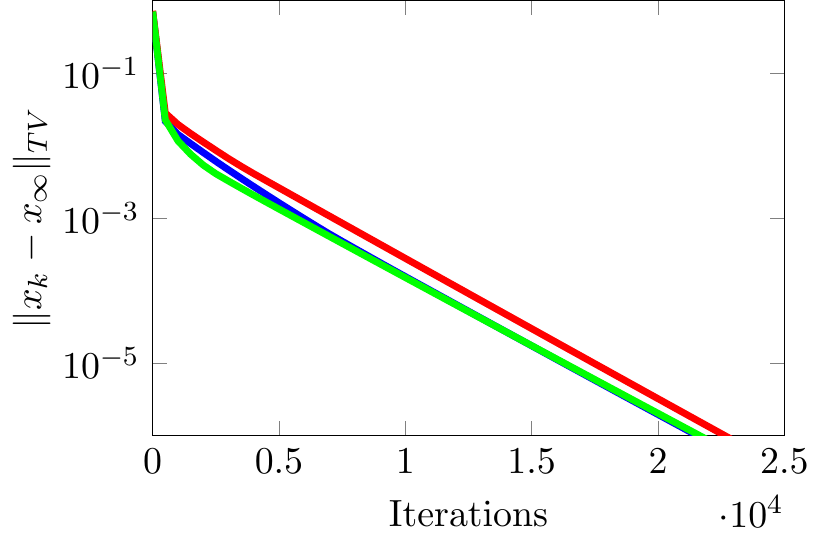}
		\caption{\textbf{Total variation distance between the beliefs and its limiting value as the number of iteration increases.} Results are shown for a particular subset of randomly selected agents.}
		\label{fig:ex4_distance}
\end{figure}

	\begin{figure}[ht]
		\centering
	\includegraphics{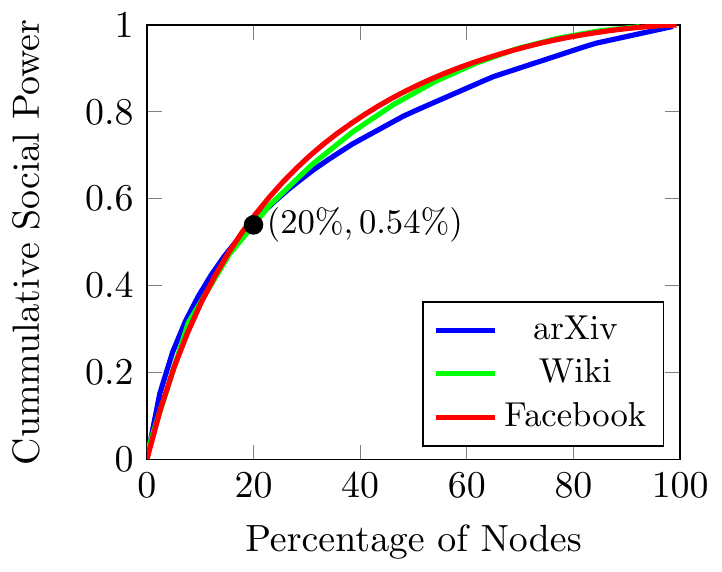}
		\caption{\textbf{Cumulative social power of the agents.} Each of the nodes in the graphs considered has some weight in the final value achieved by the belief system. In all three cases, the $20\%$ most important nodes account for $50\%$ of the final value. }
		\label{fig:social_power}
	\end{figure}

	
	\clearpage
	
	\bibliographystyle{naturemag}
	\bibliography{IEEEfull,all_refs}

\section*{Acknowledgements}

This research is supported partially by the National Science Foundation under grants no.~CPS 15-44953 and no.~CMMI-1463262, and by the Office of Naval Research under grant
no.~N00014-12-1-0998.

\section*{Author contributions statement}

AN, AO, and CAU conceived the project, derived the analytical results and wrote the manuscript. CAU performed the numerical simulations and analyzed the data. All authors reviewed the manuscript. 

\section*{Additional information}

The authors declare that they have no competing financial interests. Correspondence and requests for materials should be addressed to
CAU (cauribe@mit.edu).
	
	\newpage
	
	\section*{Supplementary Material}
	
	\renewcommand{\figurename}{\textbf{Supplementary Figure}}

	\section*{Supplementary Note 1: The Kronecker Product of Graphs}\label{sec_sup:kronecker}

	In this note, we define the Kronecker product of two matrices and the Kronecker product of two graphs. Also, we show some of the properties we will use in the proof of our main results regarding convergence, convergence time and limiting value of belief systems. 

\begin{definition}[\cite{wei62}]\label{def:kronecker}
	Let $A$ be a $m \times n$ matrix, and $C$ be a $p \times q$ matrix, the \textbf{Kronecker product} $A \otimes C$ is the $mp \times nq$ matrix defined as:
	\begin{align*}
	A\otimes C & = \left[ 
	\begin{array}{c c c }
	a_{11}C       &  \dots     & a_{1n}C \\
	\vdots             &  \ddots    & \vdots    \\
	a_{m1}C       &  \dots     & a_{mn}C \\
	\end{array}
	\right],
	\end{align*}
	or explicitly
	\begin{align*}
	A\otimes C & = \left[ 
	\begin{array}{c c c }
	a_{11}\left[ 
	\begin{array}{c c c }
	c_{11}       &  \dots     & c_{1q} \\
	\vdots             &  \ddots    & \vdots    \\
	c_{p1}       &  \dots     & c_{pq} \\
	\end{array}
	\right]       &  \dots     & a_{1n}\left[ 
	\begin{array}{c c c }
	c_{11}       &  \dots     & c_{1q} \\
	\vdots             &  \ddots    & \vdots    \\
	c_{p1}       &  \dots     & c_{pq} \\
	\end{array}
	\right]  \\
	\vdots             &  \ddots    & \vdots    \\
	a_{m1}\left[ 
	\begin{array}{c c c }
	c_{11}       &  \dots     & c_{1q} \\
	\vdots             &  \ddots    & \vdots    \\
	c_{p1}       &  \dots     & c_{pq} \\
	\end{array}
	\right]         &  \dots     & a_{mn}\left[ 
	\begin{array}{c c c }
	c_{11}       &  \dots     & c_{1q} \\
	\vdots             &  \ddots    & \vdots    \\
	c_{p1}       &  \dots     & c_{pq} \\
	\end{array}
	\right]  \\
	\end{array}
	\right]\\
	& = \left[ 
	\begin{array}{c c c c c c c c c}
	a_{11}c_{11}       &  \dots     & a_{11}c_{1q}  & \dots & a_{1n}c_{11}       &  \dots     & a_{1n}c_{1q} \\
	\vdots       &  \ddots     & \vdots   &  &\vdots         &  \ddots     & \vdots  \\
	a_{11}c_{p1}       &  \dots     & a_{11}c_{pq}  & \dots & a_{1n}c_{p1}       &  \dots     & a_{1n}c_{pq} \\
	\vdots       &       & \vdots   &  &\vdots         &     & \vdots  \\
	\vdots       &       & \vdots   &  &\vdots         &     & \vdots  \\
	a_{m1}c_{11}       &  \dots     & a_{m1}c_{1q}  & \dots & a_{mn}c_{11}       &  \dots     & a_{mn}c_{1q} \\
	\vdots       &  \ddots     & \vdots   &  &\vdots         &  \ddots     & \vdots  \\
	a_{m1}c_{p1}       &  \dots     & a_{m1}c_{pq}  & \dots & a_{mn}c_{p1}       &  \dots     & a_{mn}c_{pq} \\
	\end{array}
	\right]  .
	\end{align*}
\end{definition}

Next, we will enumerate some useful properties of the Kronecker product.

\begin{enumerate}
	\item Bilinearity and associativity: for matrices $A$, $B$ and $C$, and a scalar $k$, it holds:
	\begin{align*}
	A \otimes (B + C) &= A\otimes B + A \otimes C\\
	(A + B)\times C & = A \otimes C + B \otimes C \\
	(kA) \otimes C & = A \otimes (kB) = k(A\otimes B) \\
	(A\otimes B) \otimes C & = A\otimes (B \otimes C).
	\end{align*} 
	\item Non-Commutative: In general $A \otimes B \neq B \otimes A$. However, there exist commutation matrices $P$ and $Q$ such that:
	\begin{align*}
	A \otimes B &  = P(B \otimes A) Q,
	\end{align*}
	and if $A$ and $B$ are square matrices then $P = Q'$.
	\item Mixed-product property: for matrices $A$, $B$, $C$ and $D$:
	\begin{align*}
	(A\otimes B)(C\otimes D) &  = (AC)\otimes (BD).
	\end{align*}
\end{enumerate}

Next, we introduce the Kronecker product of graphs and some of its properties.

\begin{definition}[ {\cite[Definition~1]{wei62}}]\label{def:prod_graph}
	The Kronecker  (also known as categorical, direct, cardinal, relational, tensor, weak direct or conjunction) product $\mathcal{G} = \mathcal{G}_1 \otimes \mathcal{G}_2$ of two graphs $\mathcal{G}_1 = (V_1,E_1)$ and $\mathcal{G}_2 = (V_1,E_1)$ is a graph $\mathcal{G} = (V,E)$ where $V = V_1 \times V_2$; and $(u,u') \to (v,v') \in E$ if and only if $u \to v \in E_1$ and $u' \to v' \in E_2$. Moreover, the adjacency matrix of the graph $\mathcal{G}$ is the Kronecker product of the adjacency matrices of $\mathcal{G}_1$ and $\mathcal{G}_2$.
\end{definition}


\begin{theorem}[{\cite[Theorem~1]{McAndrew1963}} ]\label{thm:kron_directed}
	\setcounter{theorem}{0}
	Let $\mathcal{G}$ and $\mathcal{H}$ be strongly connected graphs. Let $d_1 =d(\mathcal{G})$, $d_2 =d(\mathcal{H})$, $d_3 =\text{gcd}(d_1,d_2)$ and $D = \text{lcm}(d1,d2)$. Then, the number of components in $\mathcal{G} \otimes \mathcal{H}$ is $d_3$. Moreover, for any component $\mathcal{B}$ of  $\mathcal{G} \otimes \mathcal{H}$, $d(\mathcal{B}) = D$.
\end{theorem}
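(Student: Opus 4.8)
The plan is to reduce reachability and closed walks in the product to simultaneous congruences and resolve them with the Chinese Remainder Theorem. Recall that a strongly connected digraph $\mathcal{G}$ with period $d_1 = d(\mathcal{G})$ has its vertices partitioned into cyclic classes $V_0,\dots,V_{d_1-1}$ with every arc going from $V_i$ to $V_{i+1 \bmod d_1}$; write $a(u) \in \mathbb{Z}/d_1\mathbb{Z}$ for the class of $u$ in $\mathcal{G}$ and $b(u') \in \mathbb{Z}/d_2\mathbb{Z}$ for the class of $u'$ in $\mathcal{H}$. The elementary fact I will invoke (a standard consequence of Perron--Frobenius theory / primitivity of the cyclic blocks) is that a walk of length $\ell$ from $u$ to $v$ exists in $\mathcal{G}$ iff $\ell \equiv a(v) - a(u) \pmod{d_1}$, and that this holds for \emph{every sufficiently large} such $\ell$; similarly in $\mathcal{H}$. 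Finally, by Definition~\ref{def:prod_graph} together with the mixed-product property $(A \otimes C)^{\ell} = A^{\ell} \otimes C^{\ell}$, the $((u,u'),(v,v'))$ entry of $(A\otimes C)^\ell$ is $(A^\ell)_{uv}(C^\ell)_{u'v'}$, so a walk of length $\ell$ from $(u,u')$ to $(v,v')$ in $\mathcal{G}\otimes\mathcal{H}$ exists iff there are walks of that \emph{same} length $\ell$ from $u$ to $v$ in $\mathcal{G}$ and from $u'$ to $v'$ in $\mathcal{H}$.

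\textbf{Counting the components.} Combining these, $(u,u')$ reaches $(v,v')$ in the product iff the system $\ell \equiv a(v)-a(u)\ (\mathrm{mod}\ d_1)$, $\ell \equiv b(v')-b(u')\ (\mathrm{mod}\ d_2)$ is solvable, which by CRT happens iff $a(v)-a(u) \equiv b(v')-b(u') \pmod{d_3}$, equivalently $a(u)-b(u') \equiv a(v)-b(v') \pmod{d_3}$. Hence the map $\phi(u,u') := \bigl(a(u)-b(u')\bigr) \bmod d_3$ is constant along every arc, and this reachability criterion is symmetric in the two vertices; therefore $(u,u')$ and $(v,v')$ lie in a common strongly connected component iff $\phi(u,u') = \phi(v,v')$. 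Since $a$ is onto $\mathbb{Z}/d_1\mathbb{Z}$ and $b$ onto $\mathbb{Z}/d_2\mathbb{Z}$, $\phi$ is onto $\mathbb{Z}/d_3\mathbb{Z}$, so there are exactly $d_3$ nonempty level sets, hence $d_3$ components; and because $\phi$ is arc-invariant there are no arcs between distinct level sets, so $\mathcal{G}\otimes\mathcal{H}$ is the disjoint union of these $d_3$ strongly connected graphs.

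\textbf{Period of a component.} Fix a component $\mathcal{B}$ and a vertex $(u,u') \in \mathcal{B}$. A closed walk of length $\ell$ at $(u,u')$ forces a closed walk of length $\ell$ at $u$ in $\mathcal{G}$ (so $d_1 \mid \ell$) and at $u'$ in $\mathcal{H}$ (so $d_2 \mid \ell$), hence $D \mid \ell$; thus the set of lengths of closed walks through $(u,u')$ lies in $D\,\mathbb{Z}_{\ge 0}$. Conversely, for every sufficiently large multiple $\ell$ of $D$ there is a closed walk of length $\ell$ at $u$ (since $d_1 \mid \ell$) and at $u'$ (since $d_2 \mid \ell$), hence at $(u,u')$; in particular the lengths $ND$ and $(N+1)D$ both occur for large $N$. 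Since $\mathcal{B}$ is strongly connected, $d(\mathcal{B})$ equals the gcd of the lengths of closed walks through $(u,u')$, which is therefore exactly $D$.

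The main obstacle is the "eventually all large residues" statement about walk lengths in a strongly connected periodic digraph; the rest is bookkeeping with CRT and the mixed-product property. I would either cite it as a known result on nonnegative matrices, or prove it briefly: the principal submatrix of $A^{d_1}$ on a single cyclic class $V_i$ is irreducible and aperiodic, hence primitive, so some power of it — and then all higher powers — is entrywise positive; prepending a fixed walk of length $\equiv a(v)-a(u) \pmod{d_1}$ from $u$ to $v$ then upgrades this to the claim for walks from $u$ to $v$, and the same argument applies to $\mathcal{H}$.
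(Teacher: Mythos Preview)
Your argument is correct and is essentially the standard proof of this classical result: decompose each factor into its cyclic classes, translate the existence of walks in the product (via the mixed-product identity $(A\otimes C)^\ell = A^\ell\otimes C^\ell$) into a pair of simultaneous congruences on the length $\ell$, and resolve with the Chinese Remainder Theorem. The component count via the arc-invariant map $\phi(u,u') = (a(u)-b(u'))\bmod d_3$ and the period computation via $D=\mathrm{lcm}(d_1,d_2)$ are both clean and complete. The only nontrivial ingredient is the ``eventually all admissible lengths occur'' lemma for strongly connected periodic digraphs, which you correctly isolate and sketch via primitivity of the diagonal blocks of $A^{d_1}$.

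There is, however, nothing to compare against in the present paper: this theorem is stated here as a citation of McAndrew~\cite{McAndrew1963} and is not given an independent proof in the supplementary material. So your proposal is not an alternative to the paper's argument but rather a self-contained proof of a result the paper only quotes. If you want to align with the original source, McAndrew's 1963 proof proceeds along the same congruence-and-CRT lines you have taken, so your write-up is faithful to the cited reference as well.
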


%
	
		\begin{figure}
		\centering
		\subfigure[]{
			\includegraphics{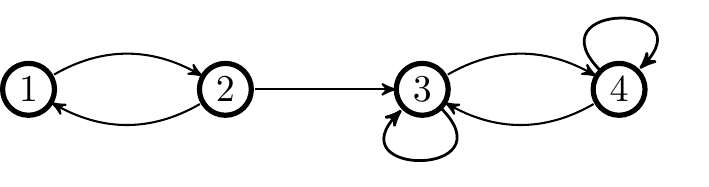}
		}
		\subfigure[]{
			\includegraphics{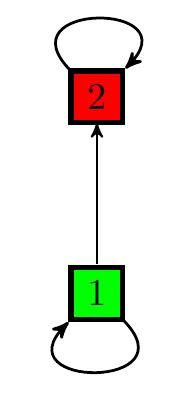}
		}
		\subfigure[]{
			\includegraphics{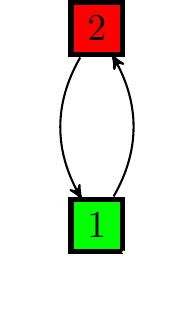}
		}
		\subfigure[]{
			\includegraphics{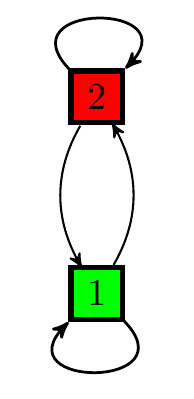}
		}
		\\
		\subfigure[]{
			\includegraphics{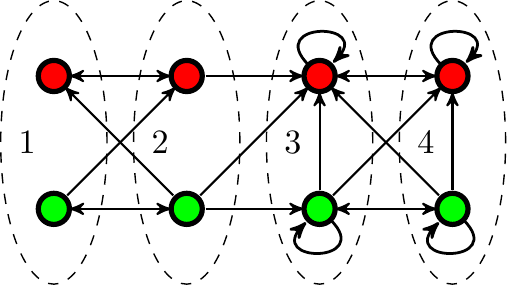}
		}
		\subfigure[]{
			\includegraphics{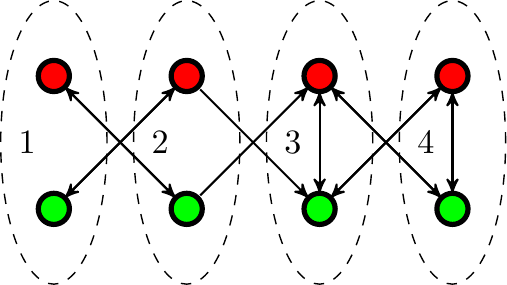}
		}
		\subfigure[]{
			\includegraphics{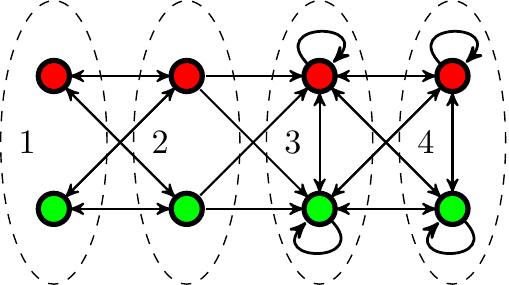}
		}
		\caption{\textbf{The influence of the logic constraints in the resulting aggregated belief system.} (a)~The network of agents, where agent $1$ follows the opinion of agent $2$, agent $2$ is influenced by agent $1$ and $3$, agent $3$ is influenced by its own opinion, and the opinion of agent $4$ and agent $4$ is influenced by agent $3$ as well as its own. (b)~The opinion on statement $1$ is influenced by the belief on statement $2$. (c)~The opinion on statements $2$ and $1$ follow each other. (d)~The opinion on statements $2$ and $1$ influence each other (e-g) The belief systems with the network of agents in (a) and  logic constraints in (b-d).}
		\label{learning_network}
	\end{figure}

\begin{figure}
		\centering
		\subfigure[]{
			\includegraphics{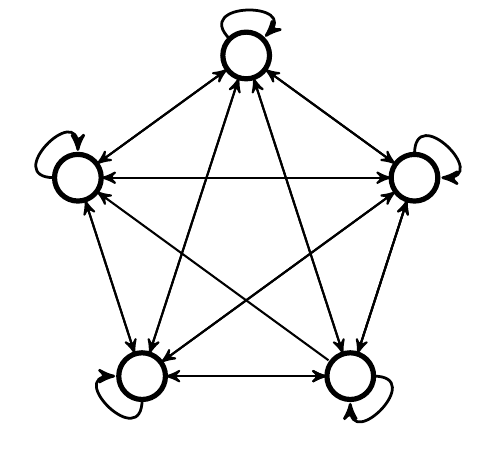}
		}
		\subfigure[]{
			\includegraphics{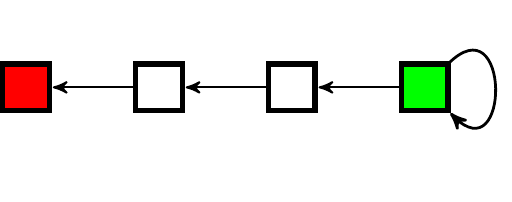}
		}
		\\
		\subfigure[]{
			\includegraphics{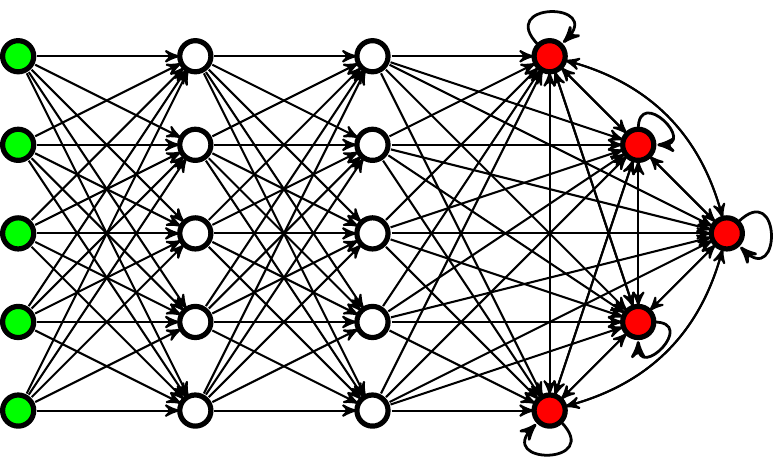}
		}
		\caption{\textbf{An additional example of a belief system.} A product of a complete graph/cycle graph with $5$ nodes and a path graph of $4$ logical belief constraints. (a)~A complete graph with $5$ agents. (b)~A directed path graph with $5$ nodes. (c) The belief system graph from the network of agents in (a) and the network of logic constraints in (b).}
		\label{fig:supp4}
	\end{figure}

\begin{figure}
		\centering
		\subfigure[]{
			\includegraphics{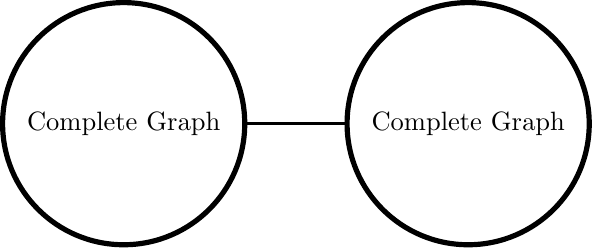}
		}
		\subfigure[]{
			\includegraphics{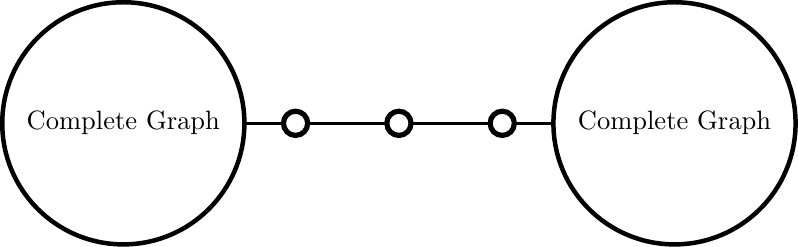}
		}
		\\
		\subfigure[]{
			\includegraphics{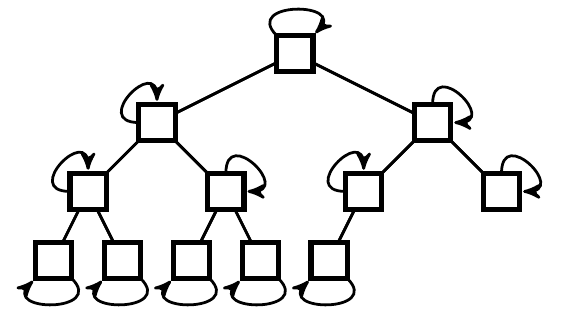}
		}
		\subfigure[]{
			\includegraphics{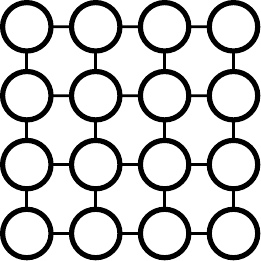}
		}
		\\
		\subfigure[]{
			\includegraphics{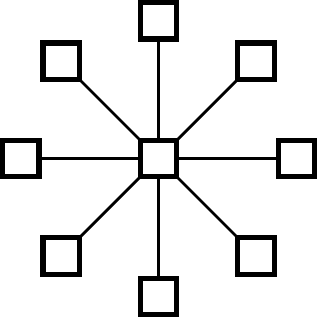}
		}
		\subfigure[]{
			\includegraphics{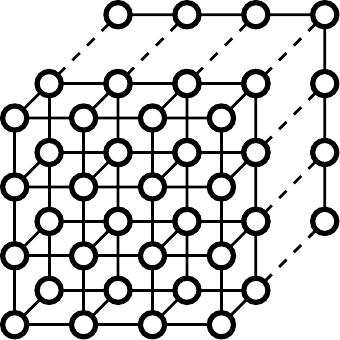}
		}
		\subfigure[]{
			\includegraphics{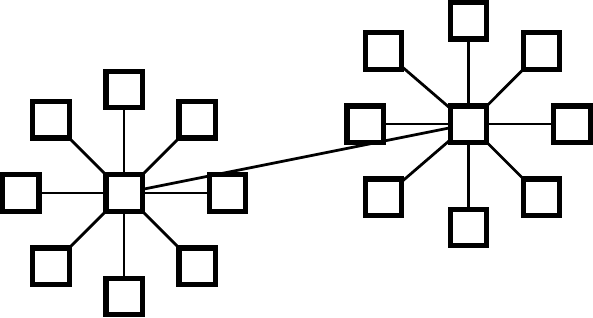}
		}
		\caption{\textbf{Examples of common graph families.} (a) Dumbbell graph, two complete graphs connected by an edge. (b) Bolas graph, two complete graphs connected by a path. (c) Complete binary tree. (d) $2$-d grid or lattice. (e) Star graph. (f) $3$-d grid. (g) Two-star graph connected to their centers.}
		\label{fig:ex_torus3_star2}
	\end{figure}

\begin{figure}
		\centering
		\subfigure[]{
			\includegraphics[width=0.3\textwidth]{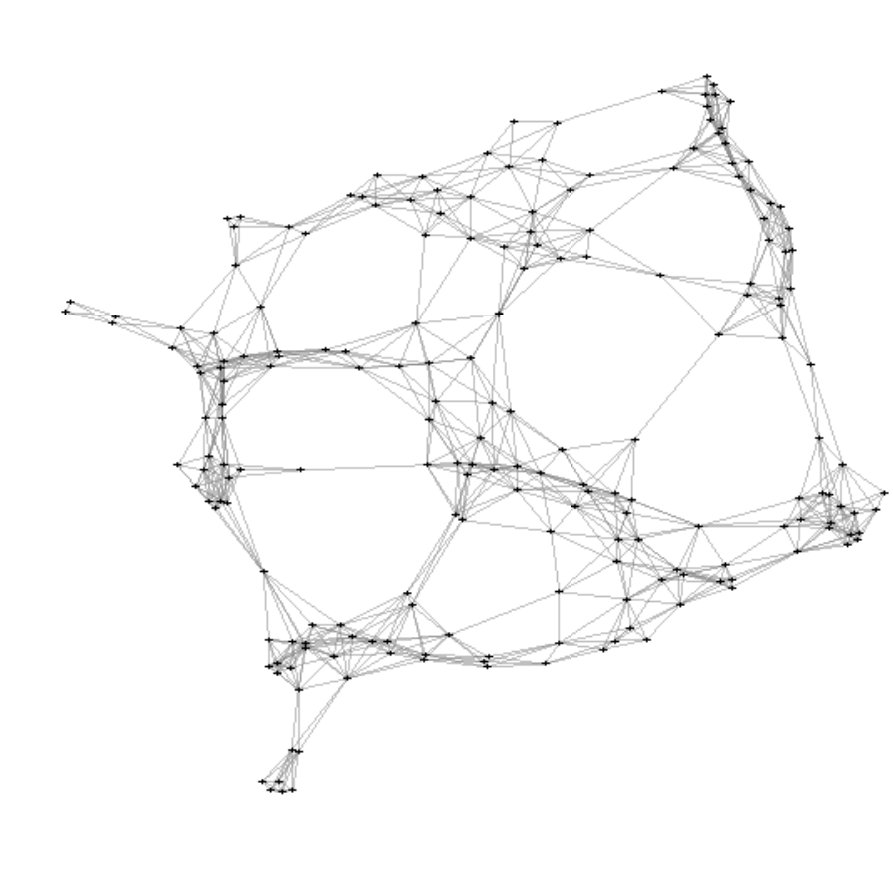}
		}
		\subfigure[]{
			\includegraphics[width=0.3\textwidth]{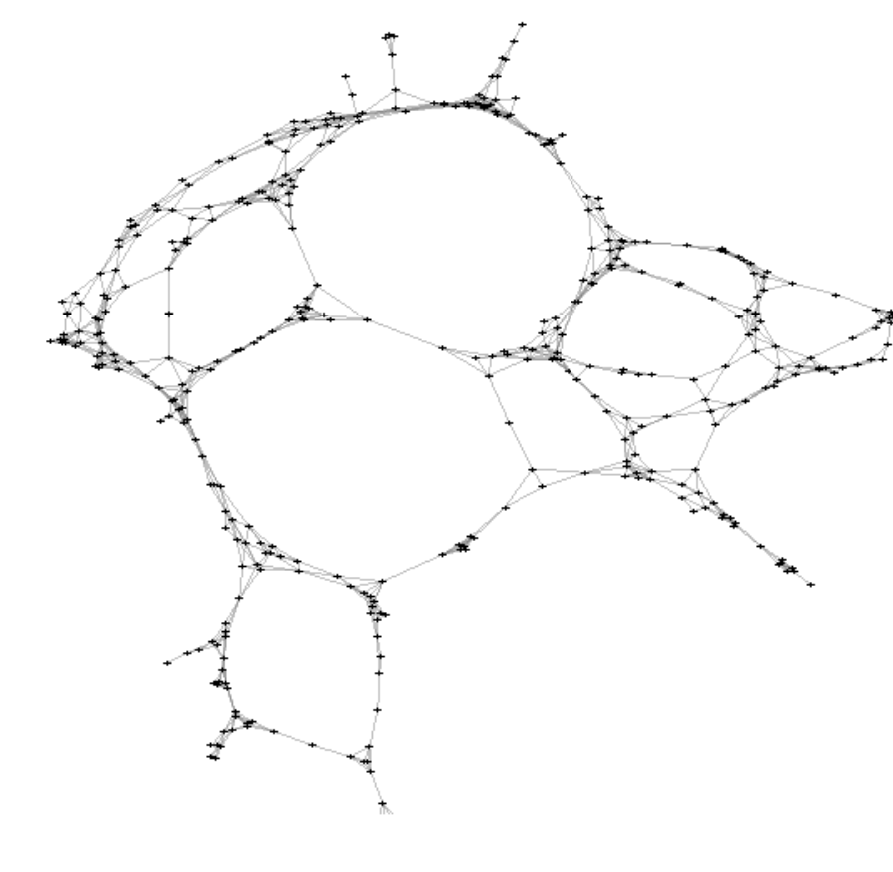}
		}
		\subfigure[]{
			\includegraphics[width=0.3\textwidth]{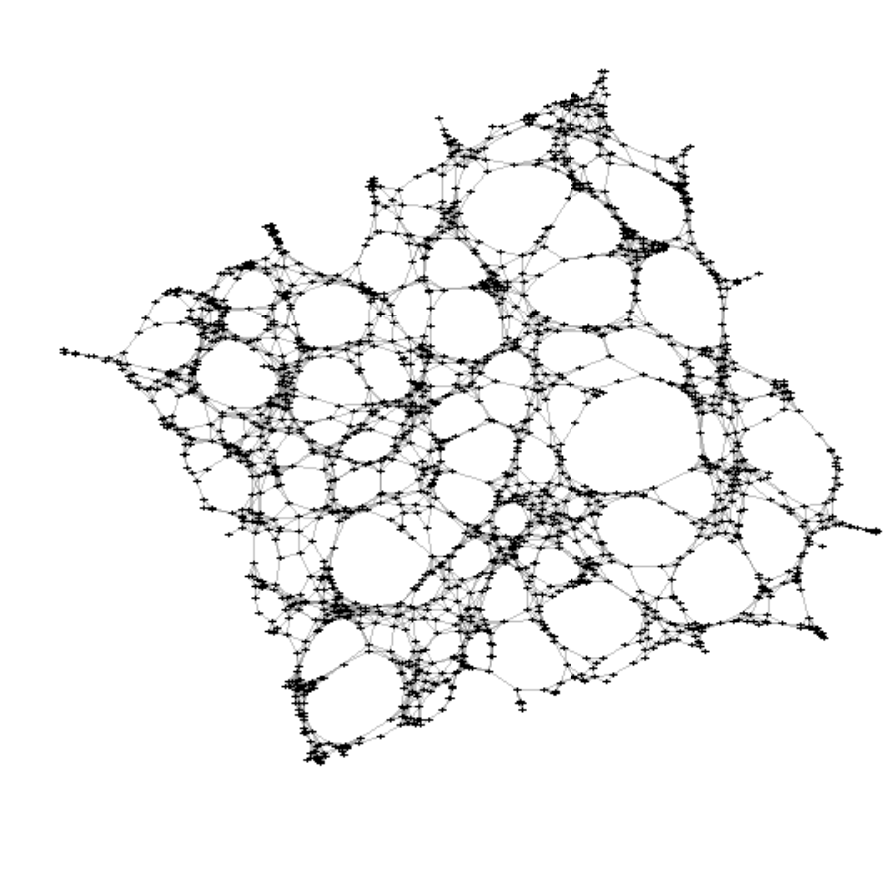}
		}
		\\    
		\subfigure[]{
			\includegraphics[width=0.3\textwidth]{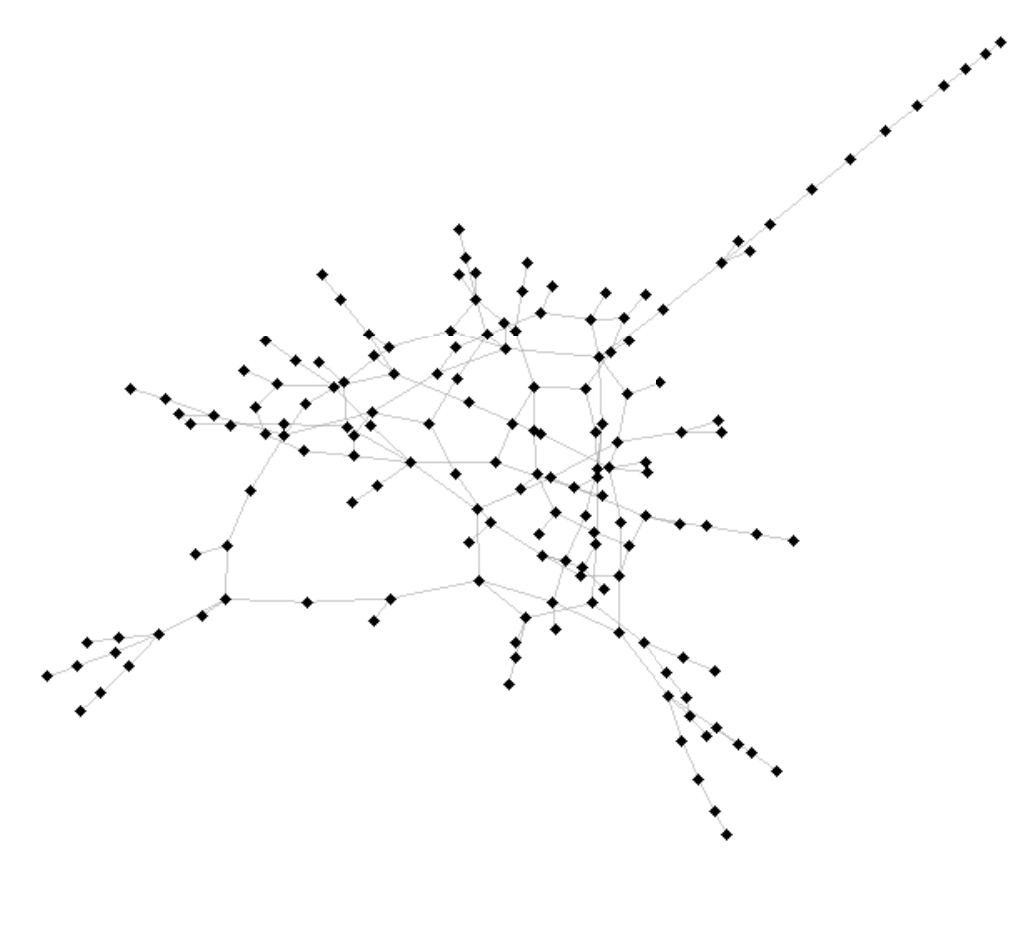}
		}
		\subfigure[]{
			\includegraphics[width=0.3\textwidth]{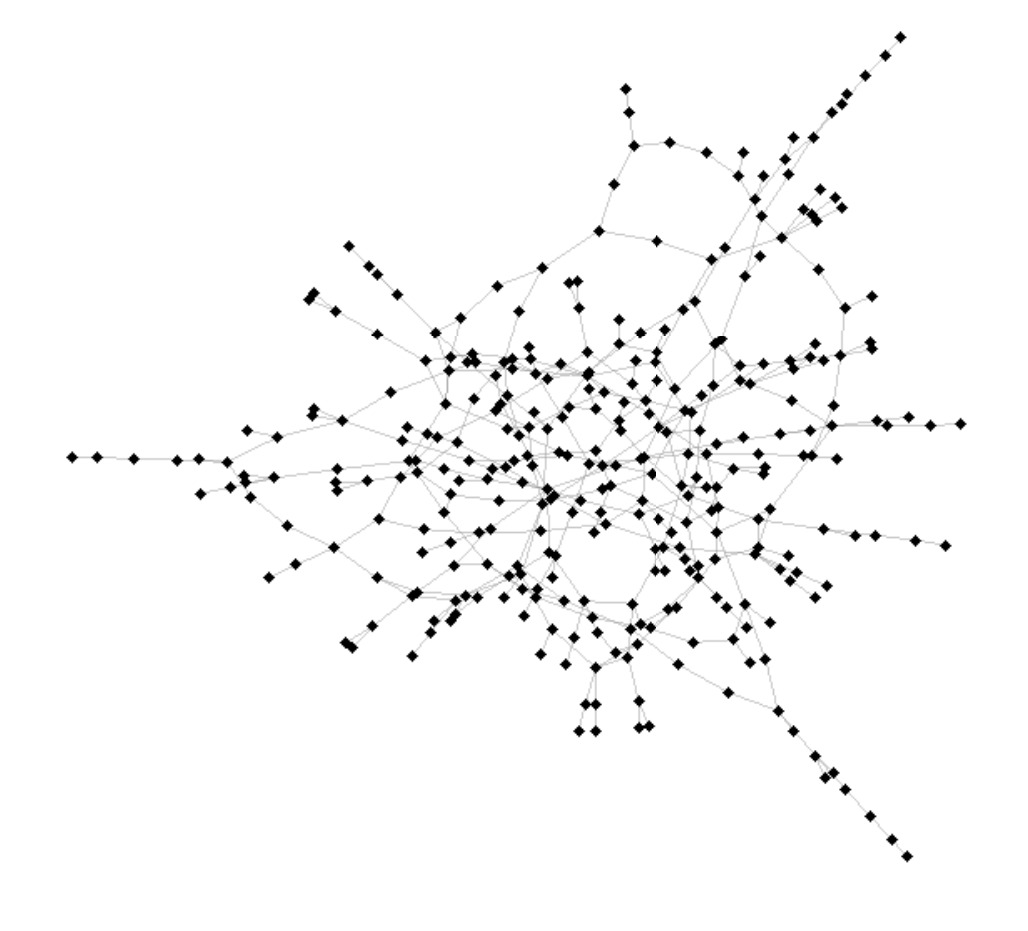}
		}
		\subfigure[]{
			\includegraphics[width=0.3\textwidth]{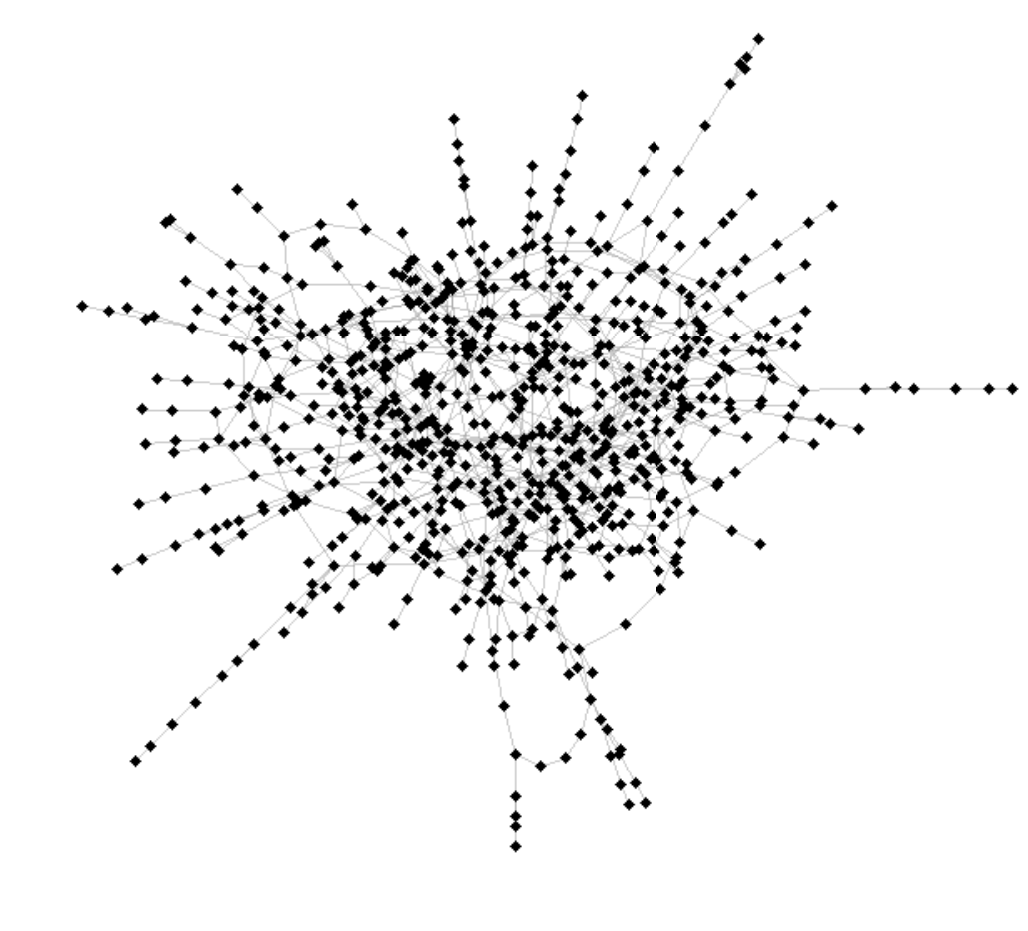}
		}
		\\
		\subfigure[]{
			\includegraphics[width=0.3\textwidth]{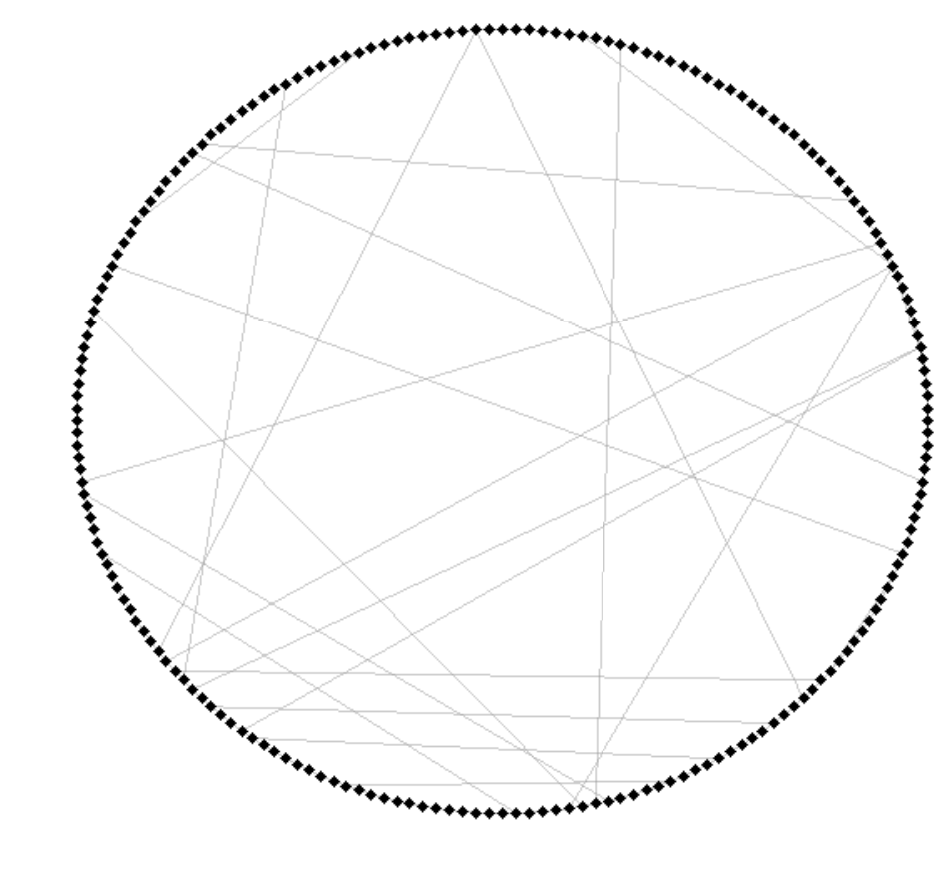}
		}
		\subfigure[]{
			\includegraphics[width=0.3\textwidth]{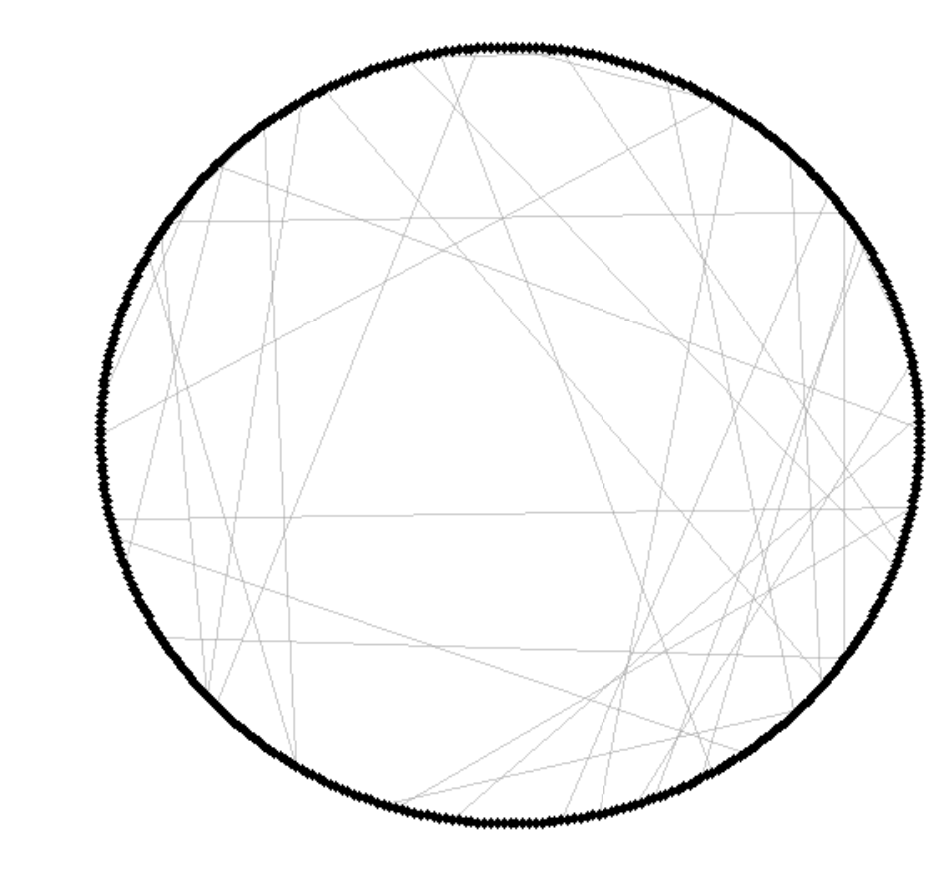}
		}
		\subfigure[]{
			\includegraphics[width=0.3\textwidth]{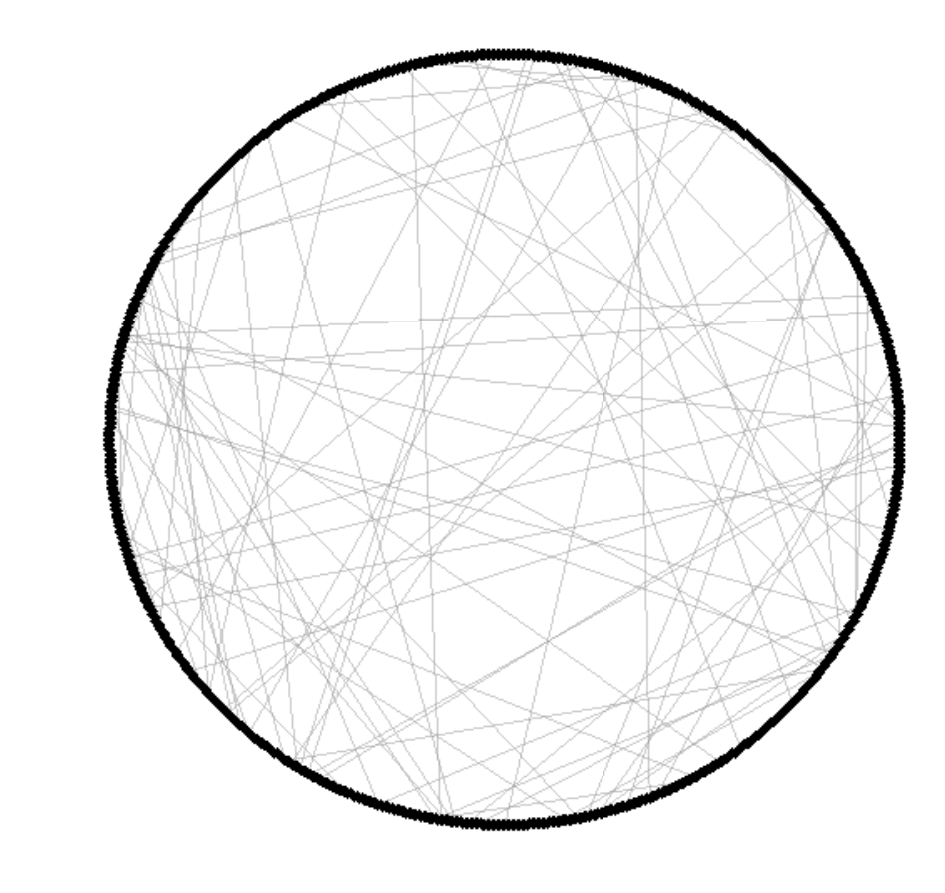}
		}
		\caption{\textbf{Examples of random graphs.} (a-c)~Geometric random graphs with $200$, $400$ and $2000$ nodes respectively. A geometric random graph is a result of randomly placing $n$ nodes in a metric space and adding an edge between two nodes if and only if their distance is smaller than certain radius $r$ \cite{pen03}.
			\mbox{(d-f)}~\mbox{\protect Erd\H{o}s-R\'enyi} random graphs with $200$, $400$ and $1000$ nodes respectively. An $\mathcal{G}_{n,p}$ Erd\H{o}s-R\'enyi graph is the result of adding edges independently with probability $p$ to a set of $n$ nodes \cite{erd60}. (g-i)~Newman-Watts Random Graphs with $200$, $400$ and $1000$ nodes respectively. The Newman-Watts graph $H_{n,k,p}$ is the random graph obtained from a $(n,k)$-ring graph by independently adding edges with probability $p$ \cite{new99}.
		}
		\label{fig:random_graphs}
	\end{figure}

\begin{figure}
		\centering
	\subfigure[]{
	\includegraphics{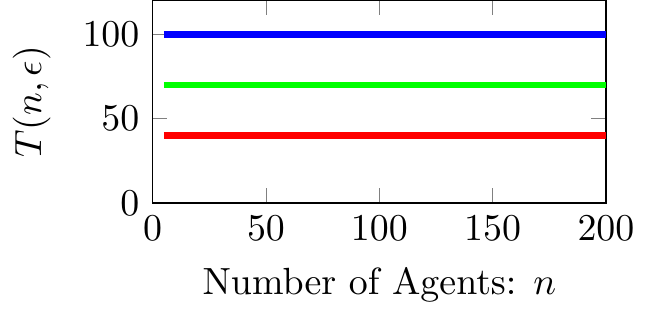}
}
\subfigure[]{
	\includegraphics{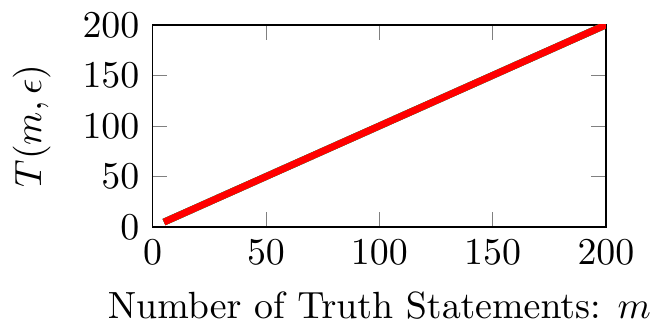}
}
\\
\subfigure[]{
	\includegraphics{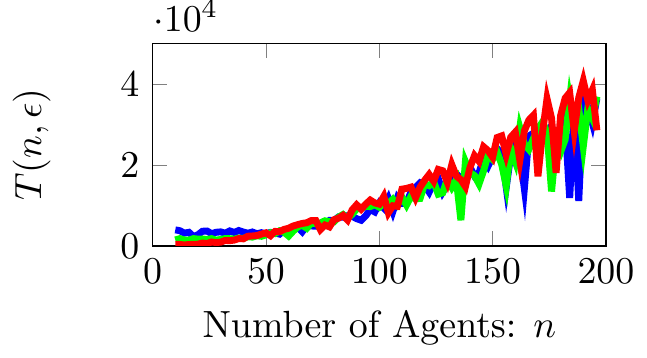}
}
\subfigure[]{
	\includegraphics{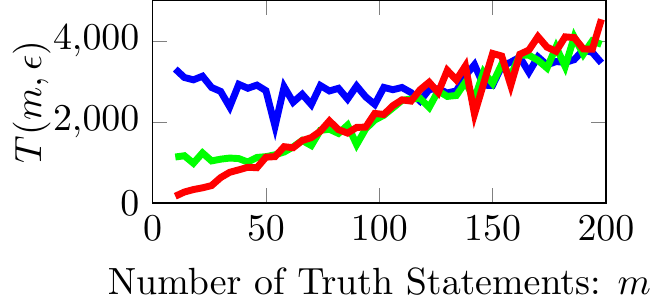}
} 
\\
\subfigure[]{
	\includegraphics{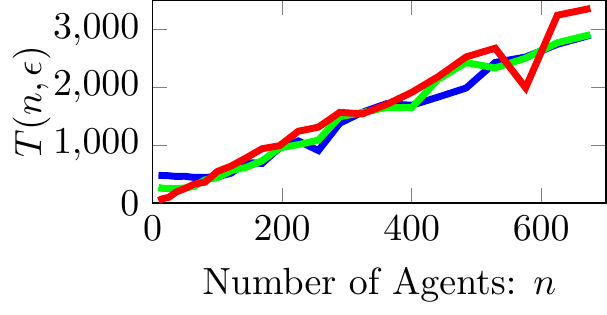}
}
\subfigure[]{
	\includegraphics{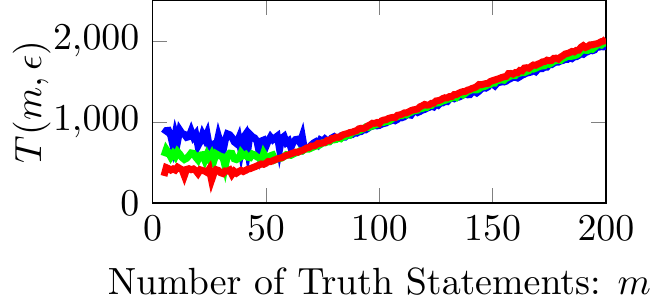}
}
\\
\subfigure[]{
	\includegraphics{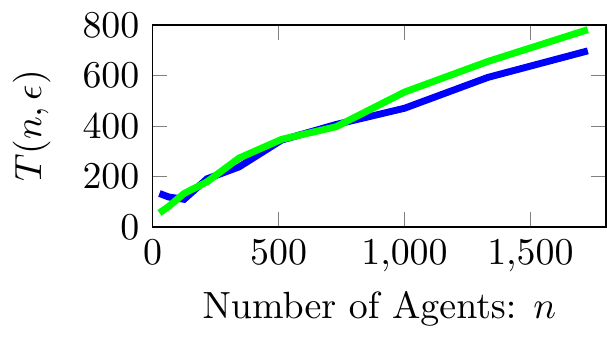}
}
\subfigure[]{
	\includegraphics{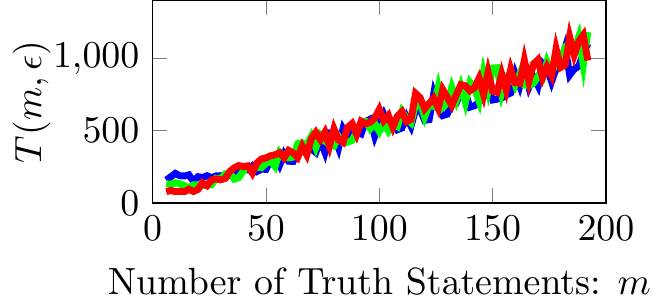}
} 
		\caption{\textbf{Convergence time for different examples of networks of agents and network of truth statements in a belief system.} Varying the number of agents for a: (a)~complete graph, (c)~dumbbell graph, (e)~$2$-d grid and (g)~$3$-d grid. Varying the number of truth statements for a: (b)~directed path, (d)~complete binary tree, (f)~star graph and (h)~two joined star graphs.}
		\label{fig:complexities}
	\end{figure}

\begin{figure}
		\centering
		\subfigure[]{
	\includegraphics{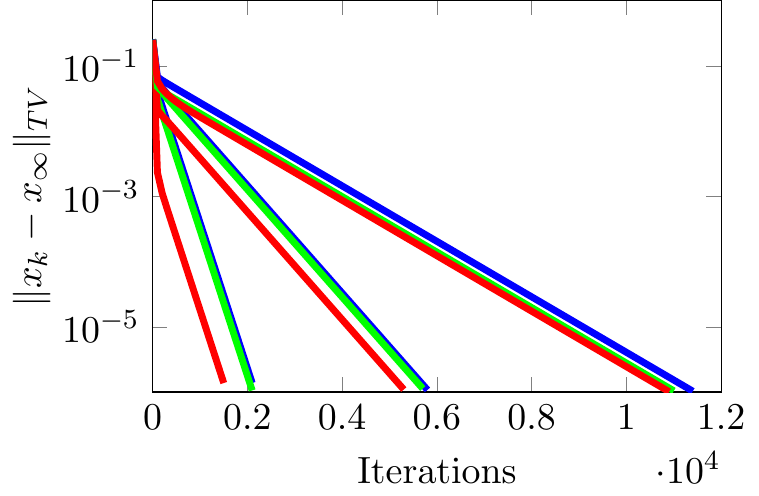}
}
\subfigure[]{
	\includegraphics{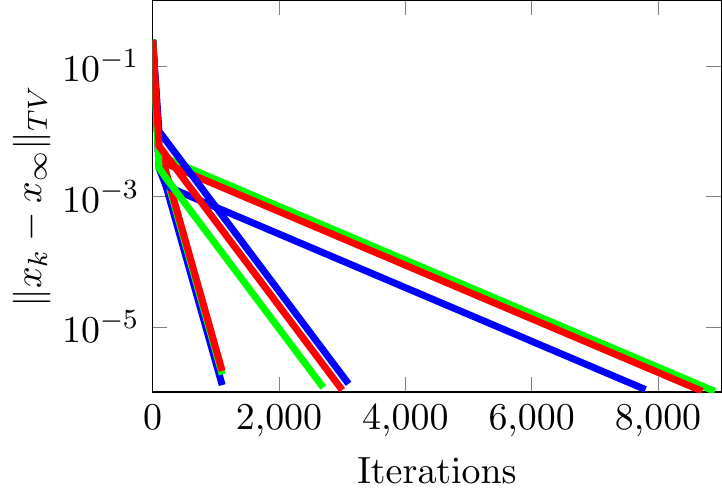}
}
\\
\subfigure[]{
	\includegraphics{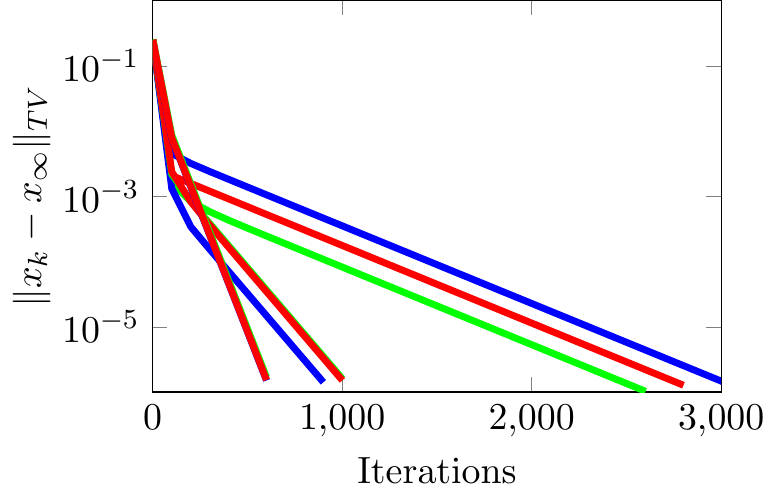}
}
\subfigure[]{
	\includegraphics{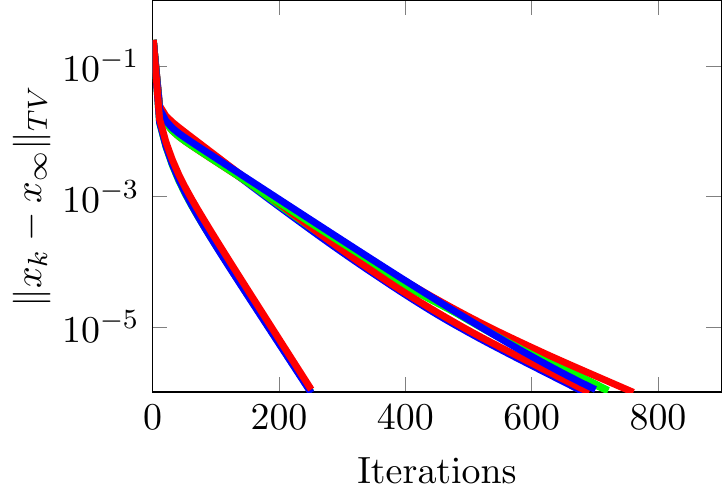}
}
		\caption{\textbf{Linear convergence of the belief system.} Distance to the final value of a belief system with: (a)~a directed cycle network of agents and a directed path of truth statements, (b)~a dumbbell network of agents and a complete binary tree of truth statements, (c)~a $2$-d grid of agents and a star network of truth statements, (d)~a $3$-d grid of agents and a two-jointed star network of truth statements.}
		\label{fig:convergence}
	\end{figure}

\begin{figure}
		\centering
		\subfigure[]{
	\includegraphics{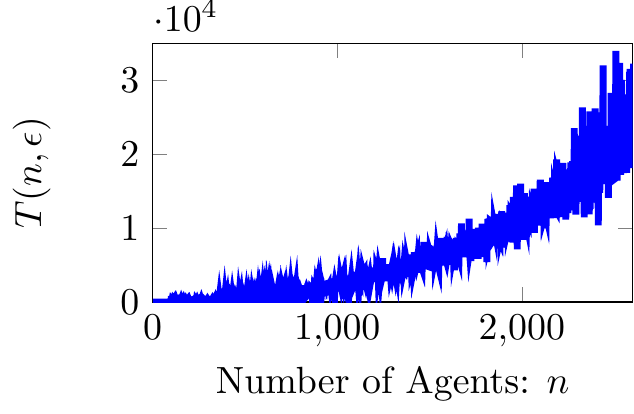}
}
\subfigure[]{
	\includegraphics{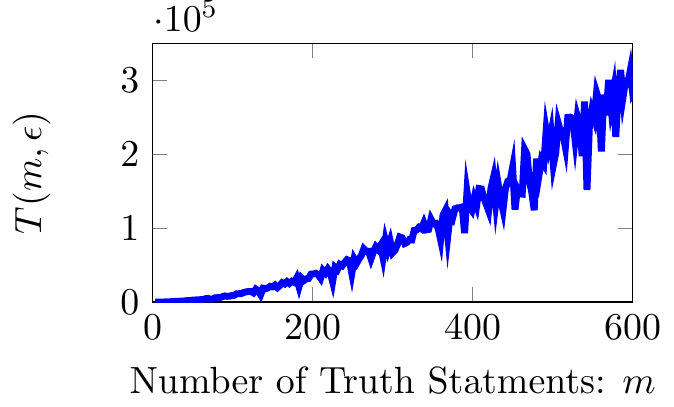}
}
\\
\subfigure[]{
	\includegraphics{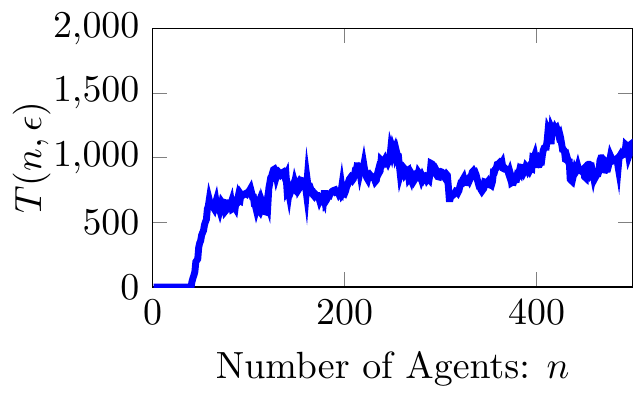}
}
\subfigure[]{
	\includegraphics{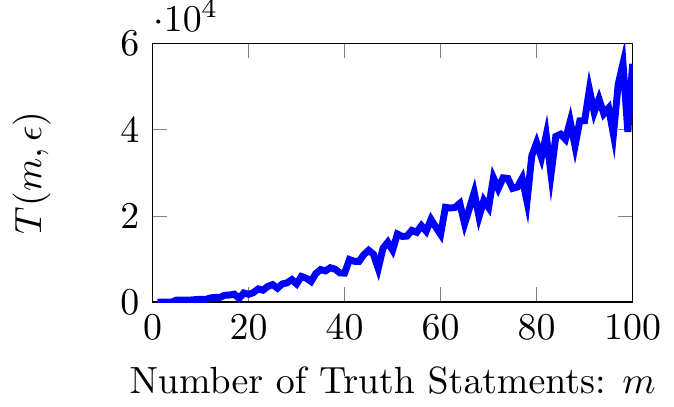}
}
\\
\subfigure[]{
	\includegraphics{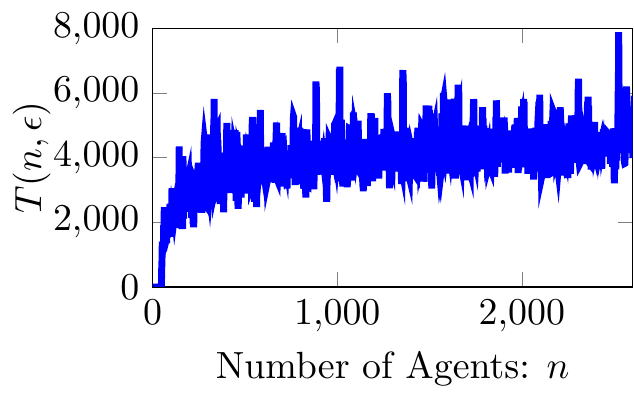}
}
\subfigure[]{
	\includegraphics{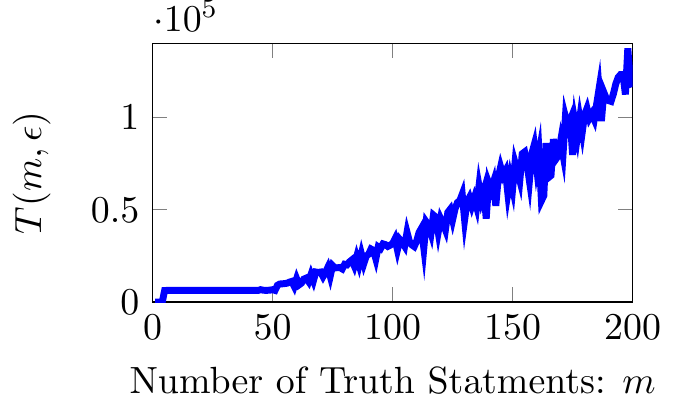}
}
		\caption{\textbf{Convergence time dependency for random graphs.} (a)~Varying the number of agents in a geometric random graph with a fixed number of truth statements in a Bolas graph. (b)~Varying the number of truth statements in a Bolas graph with a fixed number of agents in a geometric random graph. (c)~Varying the number of agents in an \mbox{\protect Erd\H{o}s-R\'enyi} random graph with a fixed number of truth statements in a dumbbell graph. (d)~Varying the number of truth statements in a dumbbell graph with a fixed number of agents in an \mbox{\protect Erd\H{o}s-R\'enyi} random graph. (e)~Varying the number of agents in a \mbox{\protect Newman-Watts} random graph with a fixed number of truth statements in an undirected path graph. (f)~Varying the number of truth statements in an undirected path graph with a fixed number of agents in a \mbox{\protect Newman-Watts} random graph random graph.}
		\label{fig:rand_complexities}
	\end{figure}

\begin{figure}
		\centering
		\subfigure[]{
	\includegraphics{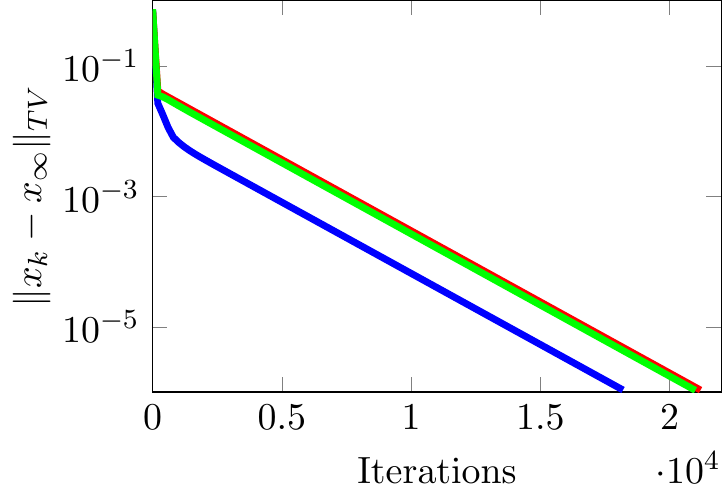}
}
\subfigure[]{
	\includegraphics{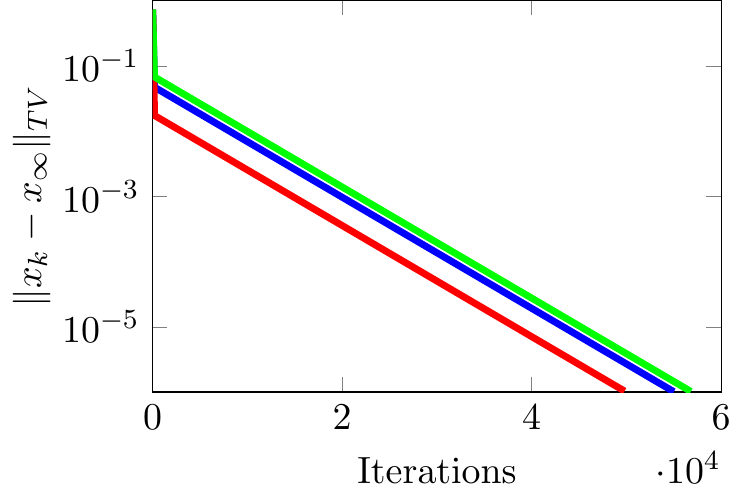}
}
\\
\subfigure[]{
	\includegraphics{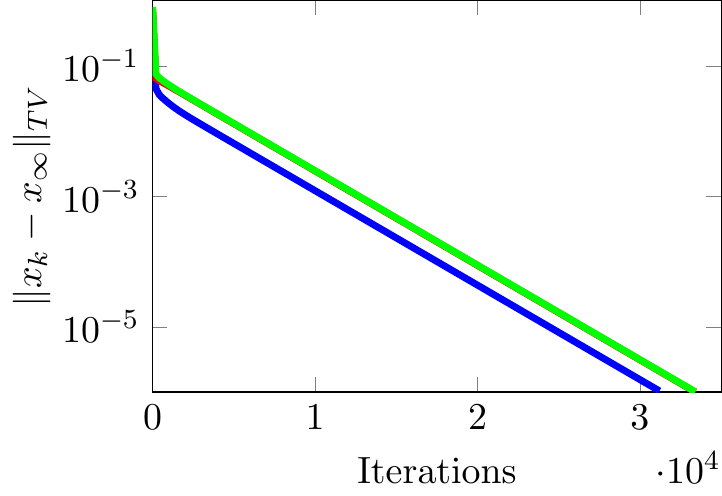}
}
		\caption{\textbf{Linear convergence rate of the belief system with random networks of agents.} (a)~Distance to the stationary distribution for a network of $200$ agents modeled as a geometric random graph and a network of $150$ truth statements modeled as a Bolas graph. (b)~Distance to the stationary distribution for a network of $500$ agents modeled as an \mbox{\protect Erd\H{o}s-R\'enyi} random graph and a network of $100$ truth statements modeled as a dumbbell graph. (c)~Distance to the stationary distribution for a network of $500$ agents modeled as a small-world random graph and a network of $100$ truth statements modeled as an undirected path graph.}
		\label{fig:rand_conv}
	\end{figure}

\bgroup
	\def\arraystretch{1.2}
	\begin{table}
		\centering
		\begin{tabular}{|cc|}  
			\hline
			\textbf{Network Topology } &  \textbf{Mixing Time } \\
			\hline
			{Cycle}   \cite[Section~$5.3.1$]{lev09}                            & $ O(n^2 )$                      \\
			{Path}      \cite{ike09,beveridge2013exact}                       & $ O(n^2 )$                        \\
			{Star Graph}   \cite{beveridge2013exact}                     & $ O(1 )$     \\
			{Two Joined Star Graphs}            & $ O(1)$       \\
			{Dumbbell Graph}  \cite{kan06}                 & $ O(n^2 )$                        \\
			{Lollipop}      \cite{ald02}                & $ O(n^2 )$  \\
			{Bolas Graph}   \cite{ald02}                    & $ O( n^3  )$       \\
			{Complete Binary Tree} \cite[Section~$5.3.4$]{lev09}          & $ O(n )$                        \\
			{$k$-d Hypercube $\{0,1\}^k$} \cite[Section~$5.3.3$]{lev09}       & $ O(k\log k+ k )$        \\
			{L-Lattice on $\mathcal{Z}_n \times \mathcal{Z}_n$}   \cite{avi05,cha96}                     & $ O(n^2)$     \\
			{$2$-d Grid}   \cite{avi05,cha96}                     & $ O(n (\log n+ ))$     \\
			
			{$3$-d Grid}  \cite{avi05,cha96}                      & $ O(n^{2/3}(\log n+ ))$       \\
			
			{$k$-d Grid} \cite{avi05,cha96}                       & $ O(2k^2n^{2 / k} (\log n+ ))$  \\
			{$2$-d Torus}      \cite[Section~$5.3.3$]{lev09}                  & $ O(n^2 )$  \\
			{$3$-d Torus}      \cite[Section~$5.3.3$]{lev09}                   & $ O(n^2 )$  \\
			{$k$-d Torus}     \cite[Section~$5.3.3$]{lev09}                   & $ O(k^2n^2 )$  \\
			
			{Eulerian Graph}      \cite{mon09}                  & $ O(|E|^2 )$  \\
			Lazy Eulerian with degree $d$-degree          \cite{boc16}                  & $ O(n|E| )$  \\
			{Eulerian: $d$-degree, max-degree weights and expansion}          \cite{mon09}                  & $ O(n^2d )$  \\
			Geometric Random Graph: $\mathcal{G}^d(n,r)$      \cite{boy05}   & $ O( r^{-2}\log n  )$       \\
			Geometric Random Graph: $\mathcal{G}^2(n,\Omega(\text{polylog}(n)))$  \cite{avi07}       & $ O(\text{polylog}(n)  )$       \\
			Erd\H{o}s-R\'enyi: $\mathcal{G}(n,c/n)$, $c>1$          \cite{ben14,fou07}   & $ O(\log^2 n  )$       \\
			Erd\H{o}s-R\'enyi: $\mathcal{G}(n,(1+\delta)/n)$, $\delta^3n \to \infty$      \cite{din11,din12}         & $ O( (1/ \delta^3)\log^2 (\delta^3n)  )$       \\
			Erd\H{o}s-R\'enyi: $\mathcal{G}(n,1/n)$    \cite{nac08}& $ O(n  )$       \\
			{Newman-Watts (small-world) Graph}   \cite{add15}   & $ O(\log^2 n  )$ \\ 
			Expander Graph  \cite{dur07}    & $ O(\log n )$        \\ 
			{Any Connected Undirected Graph with Metropolis weights} \cite{Olshevsky2017}  & $ O(n^2 )$        \\ 
			Any Connected Graph    &  $ O(|E| \text{diam}(\mathcal{G}) )$        \\ 
			\hline
		\end{tabular}
		\caption{Upper bounds on the mixing time for various graph topologies.}
		\label{tab:summary_results2}
	\end{table}

\end{document}